\documentclass[a4paper,11pt]{amsart}

\usepackage{amssymb,amsmath,amsthm,mathrsfs,enumerate,graphicx, color}
\usepackage[pdfpagelabels,colorlinks,linkcolor=blue,citecolor=black,urlcolor=blue]{hyperref}

\usepackage[british]{babel}

\usepackage{esint}
\usepackage{tikz}
\usetikzlibrary{arrows}
\usepackage{amsmath, amssymb}
\usepackage{enumitem}
\usepackage{xcolor}
\usepackage[T1]{fontenc}
\usepackage[utf8]{inputenc}

\usepackage{enumitem}
\newlist{myassump}{enumerate}{1}
\setlist[myassump]{label=\textbf{(A\arabic*)}, ref=\textbf{(A\arabic*)}}

\usepackage{tgtermes}

\usepackage{titlesec}

\titleformat{\section}
{\normalfont\bfseries\Large}  % Format: normal font, bold, large size
{\thesection}{1em}{}          % Section number formatting
\titleformat{\subsection}
{\normalfont\bfseries\large}  % Format: normal font, bold, large size
{\thesubsection}{1em}{}          % Section number formatting

\usepackage{cite}
\makeatletter
\def\Ddots{\mathinner{\mkern1mu\raise\p@
		\vbox{\kern7\p@\hbox{.}}\mkern2mu
		\raise4\p@\hbox{.}\mkern2mu\raise7\p@\hbox{.}\mkern1mu}}
\makeatother

\def\XXint#1#2#3{{\setbox0=\hbox{$#1{#2#3}{\int}$}
		\vcenter{\hbox{$#2#3$}}\kern-.5\wd0}}

\newcommand{\vertiii}[1]{{\left\vert\kern-0.25ex\left\vert\kern-0.25ex\left\vert #1
		\right\vert\kern-0.25ex\right\vert\kern-0.25ex\right\vert}}

\newtheorem{theorem}{Theorem}[section]

\theoremstyle{plain}

\theoremstyle{plain}

\newtheorem{lemma}[theorem]{Lemma}

\newtheorem{remark}[theorem]{Remark}

\theoremstyle{definition}
\newcommand{\f}{\frac}

\newcommand{\Rn}{\mathbb{R}^n}

\definecolor{rojo}{rgb}{1,0,0}
\definecolor{newrojo}{rgb}{1,0,0}
\definecolor{verde}{cmyk}{0.92,0,0.59,0.25}

\def\bey{\begin{eqnarray*}}
	\def\eey{\end{eqnarray*}}

\allowdisplaybreaks

\begin{document}
	
	\title[Endpoint estimates for certain singular integrals]{Endpoint estimates for certain singular integrals\\
	with non-smooth kernels}
	
	\author[X. Han]{Xueting Han}
	\author[X. Huo]{Xuejing Huo$^*$}
	\address[Xueting Han]{Department of Mathematics, Hefei Institute of Technology, Hefei 238706, Anhui, China}
	\email{hanxueting12@163.com}
	\address[Xuejing Huo]{School of Mathematical and Physical Sciences, Macquarie University, Sydney 2109, NSW, Australia}
	\email{xuejing.huo@hdr.mq.edu.au}

	\thanks{$^*$Corresponding author.}
	
	%\subjclass{Primary: 42B25. Secondary: 43A85.}
	\subjclass[2020]{Primary  42B20; Secondary 42B25.}
	\keywords{Endpoint estimate, square function, functional calculus, non-smooth kernels, Lorentz space}
	%\keywords{Muckenhoupt weights, BMO}

	\maketitle

	\section*{Abstract}

	Let \( L \) be a closed, densely defined operator of type \( \omega \) on \( L^2(\mathbb{R}^n) \)  with \( 0 \leq \omega < \pi/2 \). We assume that \( L \) possesses a bounded \( H_\infty \)-functional calculus and that its heat kernel satisfies suitable upper bounds. In this paper, we establish the boundedness from Lorentz spaces \( L^{p_0,1}(\mathbb{R}^n) \) to \( L^{p_0,\infty}(\mathbb{R}^n) \) 
    for some singular integrals associated with \( L \), including the vertical square function and the functional calculus of Laplace transform type, where \(p_0\) is determined by the upper bound of the heat kernel. As concrete applications, we obtain the endpoint estimates for the above singular integrals associated with both the Hardy operator and the Kolmogorov operator.

	\section{Introduction and Main Results}
	
	In this paper, we study new endpoint estimates for two types of singular integrals associated with an operator \(L\), without assuming any regularity of the heat kernel of \(L\). More precisely, let \(L\) be a closed, densely defined operator of type \(\omega\) on \(L^{2}(\mathbb R^n)\), with \(0\leq \omega<\pi/2\). By the Hille--Yoshida Theorem, $L$ generates a holomorphic semigroup $e^{-zL}$ on the sector \(\{z\in \mathbb{C}: z\neq 0,\ |\arg(z)|<\pi/2-\omega\}\), and we denote the heat kernel of \(e^{-zL}\) by \(p_{z}(x,y)\).
    %for \((x,y)\in \mathbb{R}^n \times \mathbb{R}^n\). 
    Throughout the paper, we assume that \(L\) satisfies the following two assumptions:
	\begin{myassump}
		\item \label{A1CONDI} \(L\) has a bounded $H_\infty$--functional calculus on \(L^2(\mathbb R^n)\);
		\item \label{A2CONDI} The heat kernel \(p_t(x,y)\) satisfies that for $t>0$ and $x,y\in \mathbb R^n\backslash\{0\}$, there exist $\alpha \in (0,2)$, $\epsilon>0$, and $\theta, \sigma \ge 0$ with $n/(n-\sigma)<2<n/\theta$ such that
		\begin{equation}\label{eq:heat-kernel-bounds0}
			|p_t(x,y)| \lesssim t^{-n/\alpha}
			\Big(\frac{t^{1/\alpha}+|x-y|}{t^{1/\alpha}}\Big)^{-n-\epsilon}
			\Big(1+\frac{t^{1/\alpha}}{|x|}\Big)^\theta
			\Big(1+\frac{t^{1/\alpha}}{|y|}\Big)^\sigma.
		\end{equation}
	\end{myassump}
	
	We now present two examples of operators \(L\) satisfying assumptions \(\ref{A1CONDI}\) and \(\ref{A2CONDI}\). The Kolmogorov operator is defined by
	\begin{align*}
		\Lambda_{\kappa}:=(-\Delta)^{\beta/2} +\frac{\kappa}{|x|^{\beta}} x\cdot \nabla,
	\end{align*}
	for \(\beta\in(1,2]\) with \(\beta<(n+2)/2\), and a coupling constant \(\kappa\in \mathbb R\). The precise values of $\sigma$ and $\theta$ appearing in \ref{A2CONDI} can be found in \cite{BDM}.
	
	The Hardy operator is given by
	\begin{align*}
		L_{a}:=(-\Delta)^{\gamma/2} +\frac{a}{|x|^{\gamma}} \quad \text{in } L^{2}(\mathbb R^{n})
	\end{align*}
    with \(0<\gamma\le\min\{2,n\}\) and \(a\ge -\frac{2^{\gamma}\Gamma((n+\gamma)/4)^2}{\Gamma((n-\gamma)/4)^2}\),
	that is, the fractional Laplacian plus the scalar-valued, so-called Hardy potential \(a/|x|^{\gamma}\). See \cite{BD} for the corresponding values of $\sigma$ and $\theta$ in \ref{A2CONDI}. For further details concerning \(L\), we refer the reader to Section \ref{Pre} and to \cite{ADM, CDMY, DMc}.
	
	In this paper, we first consider the square function \(S_{L}\) associated with the operator \(L\), defined by
	\begin{align*}
		S_{L}f(x) =\Big(\int_{0}^{\infty} |tLe^{-tL}f(x)|^2 \frac{dt}{t}\Big)^{1/2}.
	\end{align*}

	 The assumption \ref{A1CONDI} implies that \(S_L\) is bounded on \(L^2(\mathbb R^n)\); see \cite{Mc}. We give a brief overview of related research on endpoint estimates for the operator \(S_L\) under various choices of \(L\) and assumptions on the corresponding heat kernel. 
     If \(L\) is either \(-\Delta\) or \(\sqrt{-\Delta}\), where \(\Delta\) is the Laplacian on \(\mathbb R^n\), then \(S_L\) corresponds to the classical Littlewood--Paley--Stein functions:
	\[
	g(f)(x) = \Big(\int_{0}^{\infty} t|\sqrt{-\Delta} e^{-t\sqrt{-\Delta}}f(x)|^2dt\Big)^{1/2}\,
    \text{and}\quad 
	h(f)(x) = \Big(\int_{0}^{\infty} t|\Delta e^{t\Delta}f(x)|^2dt\Big)^{1/2}.
	\]
    It is well known that these two square functions are of weak type \((1,1)\) and bounded on \(L^p(\mathbb R^n)\) for \(1<p<\infty\). 
	If \(L\) is a (non-negative) Laplace--Beltrami operator on a complete non-compact Riemannian manifold \(M\), Coulhon, Duong and Li \cite{CDL} obtained the weak type \((1,1)\) estimate for the square function under the assumption that the heat kernel of \(L\) satisfies Gaussian upper bounds, which play a crucial role in their proofs. In addition, if \(L\) is a second-order elliptic operator in divergence form, 
	the weak type \((1,1)\) estimate for the square function associated with \(L\) was also established, provided that the heat kernel of \(L\) satisfies either Gaussian upper bounds or Poisson-type upper bounds.

   However, as shown in \cite{AT2}, Gaussian upper bounds for the kernel of the semigroup \(e^{-tL}\) with a divergence form operator \(L\) do not always hold: they hold in dimensions \(n=1,2\), but may fail in dimensions \(n \geq 5\). 
   %  in dimensions $n=1, 2$, but in dimension $n \geq 5$, these bounds may fail. 
   Blunck and Kunstmann \cite{BK} established a weak type \((p,p)\) criterion for non-integral operators for \(1 < p < \infty\). Their result generalises \cite[Theorem 6]{DM}, since the pointwise estimate of the heat kernel is not required. Yan \cite{Y} further
    %builds on the idea from \cite{HM} of proving \(L^p\) boundedness for the Riesz transforms, 
    %together with some techniques in \cite{BK}, to 
    obtained the weak type $(p_n,p_n)$ estimate for the generalised vertical Littlewood--Paley function associated with the divergence form operator \(L\),  where $p_n:=2n/(n+2)<2$ and $n \geq 3$, removing the Gaussian upper bound assumption on the kernel of \(e^{-tL}\). %removing the condition of Gaussian upper bound on \(L\).

%Let \(p_0=n/(n-\sigma)\) and \(q_0=n/\theta\). In our setting, the upper bound for the kernel of \(e^{-tL}\) is weaker than the Gaussian upper bound. Using the approaches in \cite{A, BK, Y}, one can prove that \(S_L\) is bounded on \(L^{p}(\mathbb R^n)\) for \(p_0<p<q_0\). However, this method does not yield the weak-type \((p_0,p_0)\), since the semigroup \(e^{-tL}\) does not satisfy the desired off-diagonal estimates detailed in \cite{A, BK, Y}. Hence, the main aim in this paper is to investigate an another endpoint estimate when \(p=p_0\).
     
     Let \(p_0=n/(n-\sigma)\) and \(q_0=n/\theta\). In our setting, the upper bound for the kernel of \(e^{-tL}\) is weaker than the Gaussian upper bound. Using the approaches in \cite{A, BK, Y}, one can prove that \(S_L\) is bounded on \(L^{p}(\mathbb R^n)\) for \(p_0<p<q_0\). However,  when \(p=p_0\), these methods do not yield the classical weak-type \((p_0,p_0)\) boundedness, since the semigroup \(e^{-tL}\) does not satisfy the desired off-diagonal estimates detailed in \cite{A, BK, Y}. Therefore, the main aim of this paper is to investigate an endpoint estimate at \(p=p_0\). 
     
Our first result in this paper is presented below.
	\begin{theorem}\label{thm: square function}
		Assume that $L$ satisfies the assumptions \ref{A1CONDI} and \ref{A2CONDI}. Then we have that $S_L$ is bounded from $L^{p_0,1}(\mathbb R^n)$ to $L^{p_0,\infty}(\mathbb R^n)$, i.e., for any $\lambda>0$,
		\begin{align*}
						|\{x\in \mathbb{R}^n: |S_L(f)(x)|>\lambda \}| \lesssim \lambda^{-p_0} \|f\|_{L^{p_0,1}(\mathbb{R}^n)}^{p_0},
		\end{align*}
		where, in the following, \(L^{p_0,1}(\mathbb R^n)\) denotes the Lorentz space and it is a proper subspace of \(L^{p_0}(\mathbb R^n)\). See Section \ref{Pre} for the definition of Lorentz spaces. 
	\end{theorem}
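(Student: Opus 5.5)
We use the standard reduction for restricted weak type. Since \(L^{p_0,1}\) is the Lorentz space whose atoms are normalised characteristic functions, it suffices to show
\[
|\{x: S_L(\chi_E)(x)>\lambda\}|\lesssim \lambda^{-p_0}|E|
\]
for every measurable set \(E\) of finite measure. This is restricted weak type \((p_0,p_0)\). The sublinear operator \(S_L\) then extends to \(L^{p_0,1}\to L^{p_0,\infty}\) by decomposing \(f\) into level sets (Stein--Weiss) and using the fact that \(L^{p_0,\infty}\) is normable for \(p_0>1\). Fix \(f=\chi_E\) and \(\lambda>0\). We split the heat kernel bound \eqref{eq:heat-kernel-bounds0} according to the singular factor in \(y\): for \(t>0\),
\[
\Big(1+\frac{t^{1/\alpha}}{|y|}\Big)^\sigma\lesssim 1+\frac{t^{\sigma/\alpha}}{|y|^{\sigma}}.
\]
By holomorphy on the sector and Cauchy's formula, \(tLe^{-tL}\) has a kernel \(K_t(x,y)\) obeying the same bound, possibly with slightly smaller \(\epsilon\). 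We write \(K_t=K_t^{(1)}+K_t^{(2)}\), where \(K_t^{(1)}\) carries the factor \(1\) in \(y\) and \(K_t^{(2)}\) carries \(t^{\sigma/\alpha}|y|^{-\sigma}\).

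\textbf{Part with \(K^{(1)}\).} Here the kernel bound in \(y\) is Poisson-like, except for the factor \((1+t^{1/\alpha}/|x|)^\theta\). Since \(\theta<n/2\), this factor is locally \(L^2\) in \(x\), and \(|x|^{-\theta}\) lies in \(L^{q_0,\infty}\). We estimate the vertical square function of \(K^{(1)}\) by Minkowski's inequality in \(t\), pointwise in \(x\). Setting \(r=t^{1/\alpha}\), the bound
\[
\int_0^\infty r^{-2n}\big(1+|x-y|/r\big)^{-2n-2\epsilon}\big(1+r/|x|\big)^{2\theta}\frac{dr}{r}
\]
should be \(\lesssim |x-y|^{-2n}\big(1+|x-y|/|x|\big)^{2\theta}\). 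The resulting operator is dominated by a Riesz-potential-type convolution of \(\chi_E\) times weights. Because \(p_0<2<q_0\), the weak \(L^{p_0}\) bound follows from the \(L^2\) bound of \(S_L\) given by \ref{A1CONDI}, combined with a Calderón--Zygmund-type splitting that uses size estimates only. We do this at height \(\lambda\): take the set \(\{M\chi_E>\lambda^{p_0}\}\), whose measure is \(\lesssim\lambda^{-p_0}|E|\), and on its complement control the off-diagonal tail through the \(\epsilon\)-decay.

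\textbf{Part with \(K^{(2)}\).} This is the genuinely new endpoint piece. We have
\[
S^{(2)}\chi_E(x)\lesssim \int_E |y|^{-\sigma}\,\Phi(x,y)\,dy,
\]
where
\[
\Phi(x,y)=\Big(\int_0^\infty \big|t^{\sigma/\alpha}t^{-n/\alpha}(1+|x-y|t^{-1/\alpha})^{-n-\epsilon}(1+t^{1/\alpha}/|x|)^{\theta}\big|^2\frac{dt}{t}\Big)^{1/2}.
\]
The \(t\)-integral is dominated by \(t^{1/\alpha}\approx|x-y|\), which gives \(\Phi\lesssim |x-y|^{\sigma-n}\) up to the \(\theta\)-factor; \(\epsilon>0\) ensures convergence for large \(t\). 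So the main term is \(I_\sigma(|y|^{-\sigma}\chi_E)(x)\), with \(I_\sigma\) the Riesz potential. The key computation is that
\[
\| |y|^{-\sigma}\chi_E\|_{L^{1}}\lesssim |E|^{1-\sigma/n}=|E|^{1/p_0'}\cdots,
\]
or more precisely \(\||y|^{-\sigma}\chi_E\|_{L^{r,1}}\lesssim |E|^{1/p_0}\) for suitable \(r\) by the rearrangement inequality. Hardy--Littlewood--Sobolev in Lorentz form then gives \(I_\sigma:L^{1}\to L^{n/(n-\sigma),\infty}=L^{p_0,\infty}\). Using the Hardy--Littlewood rearrangement, \(\int_E|y|^{-\sigma}dy\le\int_{B_E}|y|^{-\sigma}dy\approx|E|^{1-\sigma/n}=|E|^{1/p_0}\). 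Hence \(\|S^{(2)}\chi_E\|_{L^{p_0,\infty}}\lesssim|E|^{1/p_0}\), which is exactly restricted weak type. This explains why \(p_0\) is attained only in the Lorentz sense.

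\textbf{Main obstacle.} The hardest step will be handling the factor \((1+t^{1/\alpha}/|x|)^\theta\) jointly with the \(|y|^{-\sigma}\) singularity in the \(t\)-integral. For large \(t\) it adds growth \(t^{\theta/\alpha}|x|^{-\theta}\), and convergence needs \(\epsilon>\theta+\sigma\)-type room that we may not have. If this fails, we first split \(\mathbb R^n\) into the regions \(|x|\ge|x-y|\) and \(|x|<|x-y|\). In the second region we bound the kernel by \(|x|^{-\theta}|x-y|^{\theta+\sigma-n}|y|^{-\sigma}\), and we control it via the weighted Stein--Weiss inequality with \(\theta<n/2\), for which \(L^1\)-type input is enough to land in \(L^{p_0,\infty}\). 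If convergence at \(t\to\infty\) still fails, we use the \(L^2\) bound from \ref{A1CONDI} on the large-\(t\) portion restricted to \(\{|x|\gtrsim |E|^{1/n}\}\).
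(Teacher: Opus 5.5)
Your treatment of the $K^{(2)}$ piece is sound, and the obstacle you flag there is not real. The condition $n/(n-\sigma)<2<n/\theta$ gives $\sigma,\theta<n/2$, hence $\sigma+\theta<n$, so at large $t$ the decay $t^{-n/\alpha}$ beats the growth $t^{(\sigma+\theta)/\alpha}$ with no help from $\epsilon$. Then $\Phi(x,y)\lesssim|x-y|^{\sigma-n}(1+|x-y|/|x|)^{\theta}$ has $L^{p_0,\infty}$ norm in $x$ bounded uniformly in $y$, and normability of $L^{p_0,\infty}$ together with $\int_E|y|^{-\sigma}\,dy\lesssim|E|^{1/p_0}$ finishes it.

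The genuine gap is the $K^{(1)}$ piece, which is the singular-integral core of the theorem and which your proposal only gestures at. Its majorant after Minkowski in $t$ is $|x-y|^{-n}(1+|x-y|/|x|)^{\theta}$. This is a Calder\'on--Zygmund-size kernel, not a Riesz potential: $\int_E|x-y|^{-n}\,dy=\infty$ at every density point of $E$. You also cannot appeal to \ref{A1CONDI} for this piece. $K^{(1)}_t$ is a piece of a majorant, not the kernel of $\psi(tL)$ for any $\psi$, so neither the $L^2$ bound nor the semigroup structure passes to it. Nor can a Calder\'on--Zygmund splitting with size estimates only close the bad part. The $t$-integral is dominated by $t^{1/\alpha}\approx|x-y|$, where $(1+|x-y|t^{-1/\alpha})^{-n-\epsilon}\approx1$, so the $\epsilon$-decay is lost. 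Away from the cubes you are left with $\sum_j\|b_j\|_{L^1}|x-c_j|^{-n}$, which is not integrable at infinity and does not sum. Concretely, let $E$ half-fill many cubes $Q_j$ of side $r$ spread with density $\delta$ through a cube of side $R$, and fix a height $\lambda$ at which the $Q_j$ are the Calder\'on--Zygmund cubes. Then this majorant is of order $\delta\log(R/r)$ on most of the big cube, a set of measure $\approx\delta^{-1}|E|$. That is not $\lesssim\lambda^{-p_0}|E|$ once $\delta$ is small and $\log(R/r)\gg\lambda/\delta$.

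What is missing is a way to exploit $\int b_j=0$ at scales $t\gg r_j^\alpha$ without any regularity of the kernel. The paper does this with the Duong--McIntosh regularisation $b_j=(I-(I-e^{-r_j^\alpha L})^m)b_j+(I-e^{-r_j^\alpha L})^m b_j$. The first sum is controlled directly in $L^2$, by duality, Lemma \ref{lem: adjoint_operator_analogue} and the $L^2$-boundedness of $M_r$, and is then handled by the $L^2$ bound of $S_L$. For the second, $tLe^{-tL}(I-e^{-r_j^\alpha L})^m=\int_{[0,r_j^\alpha]^m}tL^{m+1}e^{-(t+|\mathbf u|)L}\,d\mathbf u$ gains the factor $(r_j^\alpha/t)^m$ for $t\gg r_j^\alpha$. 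Combined with Lemma \ref{lem: T_t: L^{p_0,1} to L^q}, this makes the off-diagonal $L^2$ sums over annuli converge. The $|y|^{-\sigma}$ singularity is absorbed inside those lemmas through Lorentz--H\"older with $|y|^{-\sigma}\in L^{p_0',\infty}$ and $|x|^{-\theta}\in L^{q_0,\infty}$; this is the same mechanism as your $\int_E|y|^{-\sigma}\,dy\lesssim|E|^{1/p_0}$. It cannot be peeled off beforehand by splitting the majorant, since what remains then has neither cancellation nor an $L^2$ theory.
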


 %\begin{remark}
% \normalfont
%Theorem \ref{thm: square function} establishes the weak-type boundedness for the square function \(S_L\) on the Lorentz space \(L^{p_0,1}(\mathbb R^n)\), which provides an endpoint estimate at \(p=p_0\).
%Recall that the \(L^p\) boundedness for \(S_L\) with \(p_0<p<q_0\) can be obtained by the approaches in \cite{A, BK, Y}, but the literature cannot answer the endpoint case when \(p=p_0\). 
%      Theorem \ref{thm: square function} provides the weak-type estimates in Lorentz space at \(p=p_0\), whereas the existing approaches in \cite{A, BK, Y} may fail to yield the classical weak-type \((p_0,p_0)\) bound.
 %\end{remark}     

	The second class of singular integrals we focus on is the functional calculus of Laplace transform type $\mathcal M(L)$ associated with $L$. Let $m: [0,\infty) \to \mathbb C$ be a bounded function. Then $\mathcal M(L)$ is defined by
	\[
	\mathcal M(L) = \int_0^\infty m(t)Le^{-tL}dt.
	\]
	The spectral theory implies that $\mathcal M(L)$ is $L^2$-bounded. When \(m(t)=\frac{1}{\Gamma(1+i\alpha)}t^{i\alpha}\) for \(\alpha\in \mathbb{R}\), the operator \(\mathcal{M}(L)\) coincides with the imaginary power operator \(L^{i\alpha}\), where
\[
L^{i\alpha}=\frac{1}{\Gamma(1+i\alpha)}\int_{0}^{\infty} t^{i\alpha} L e^{-tL}\,dt, \quad i^2=-1.
\]

    In this paper, we investigate the endpoint estimates for $\mathcal M(L)$ associated with any $L$ satisfying assumptions \ref{A1CONDI} and \ref{A2CONDI} by establishing the boundedness from the Lorentz space $L^{p_0,1}(\mathbb{R}^n)$ to $L^{p_0,\infty}(\mathbb{R}^n)$. To be more explicit, we state our second result as follows. %The corresponding result is stated below.
	
	\begin{theorem}\label{THEMML}
		Assume that $L$ satisfies the assumptions \ref{A1CONDI} and \ref{A2CONDI}. Then we have that $\mathcal M(L)$ is bounded from $L^{p_0,1}(\mathbb{R}^n)$ to $L^{p_0,\infty}(\mathbb{R}^n)$, that is, for any $\lambda>0$,
		\begin{align*}
			|\{x\in \mathbb{R}^n: |\mathcal{M}(L)(f)(x)|>\lambda \}| \lesssim \lambda^{-p_0} \|f\|_{L^{p_0,1}(\mathbb{R}^n)}^{p_0}.
		\end{align*}
	\end{theorem}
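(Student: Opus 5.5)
Since $L^{p_0,1}$ is characterised by its atoms, I will reduce the estimate to characteristic functions of sets. By the standard theory of restricted weak type operators (Stein--Weiss), for a sublinear operator it suffices to show that for every measurable set $E$ of finite measure and every $\lambda>0$,
\[
|\{x: |\mathcal M(L)\chi_E(x)|>\lambda\}|\lesssim \lambda^{-p_0}|E|.
\]
Here $L^{p_0,\infty}$ is normable because $p_0>1$, so the restricted estimate extends to all of $L^{p_0,1}$. Bounded $F$ with $|F|\le \chi_E$ are handled the same way. On each region the weights take fixed form, and I split $f=f\chi_{\{|y|\le R\}}+f\chi_{\{|y|>R\}}$ with a scale $R$ to be chosen. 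It helps to record the kernel bound for $Le^{-tL}$ first. Write $Le^{-tL}=\frac{2}{t}(\frac t2 L e^{-\frac t2L})e^{-\frac t2 L}$ and use analyticity of the semigroup in a sector together with Cauchy's formula in $t$. This gives, for the kernel $K_t$ of $tLe^{-tL}$, the same upper bound as in \eqref{eq:heat-kernel-bounds0}, possibly with a slightly smaller $\epsilon$. This is where I would use \ref{A1CONDI} and the holomorphy, since the bound in \ref{A2CONDI} is only stated for real $t$.

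Next I decompose the kernel. Integrating in $t$ against the bounded $m$, the kernel $K(x,y)=\int_0^\infty m(t)K_t(x,y)\,\frac{dt}{t}$ satisfies
\[
|K(x,y)|\lesssim \int_0^\infty t^{-n/\alpha}\Big(1+\frac{|x-y|}{t^{1/\alpha}}\Big)^{-n-\epsilon}\Big(1+\frac{t^{1/\alpha}}{|x|}\Big)^{\theta}\Big(1+\frac{t^{1/\alpha}}{|y|}\Big)^{\sigma}\frac{dt}{t}.
\]
This is non-integrable on the diagonal, so I do not use it directly. Instead I split $\mathcal M(L)=\mathcal M(L)\chi_{\text{near}}+\mathcal M(L)\chi_{\text{far}}$ via the Whitney-type regions $|x-y|<|y|/2$ and its complement. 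There are three pieces to handle.
\begin{enumerate}
\item[(i)] \textbf{Near diagonal, $|x|\sim|y|$.} The weights $(1+t^{1/\alpha}/|x|)^\theta(1+t^{1/\alpha}/|y|)^\sigma$ behave like $(1+t^{1/\alpha}/|y|)^{\theta+\sigma}$. Compared with a classical Calderón--Zygmund kernel, the excess is controlled by $|y|$-localised pieces. On dyadic annuli $A_j=\{|y|\sim 2^j\}$ I would use the $L^2$-boundedness of $\mathcal M(L)$ together with the localisation, which produces a bound in $L^p$ for every $p$ near $2$ on each annulus. Since this part does not see the singular weight, summing over $j$ with bounded overlap gives a weak $(p_0,p_0)$ (even strong type) bound.
\item[(ii)] \textbf{Far region, $|x-y|\gtrsim|y|$ and $|x|\gtrsim |y|$.} Here $|K(x,y)|\lesssim |x|^{-n}(|x|/|y|)^{\sigma}$ after integrating in $t$; the convergence at large $t$ uses $\theta<n/2$ and $\epsilon>0$. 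This gives a Hardy-type operator
\[
\int_{|y|\lesssim|x|}|x|^{-n+\sigma}|y|^{-\sigma}|f(y)|\,dy.
\]
Its adjoint structure shows it maps $L^{p_0,1}\to L^{p_0,\infty}$ exactly, because $|y|^{-\sigma}\in L^{p_0',\infty}$ with $p_0'=n/\sigma$. By Hölder's inequality in Lorentz spaces, $\int |y|^{-\sigma}|f|\lesssim\|f\|_{L^{p_0,1}}$ (for $y$ restricted to the ball of radius $|x|$, which is precisely the bound), while $|x|^{-n+\sigma}\in L^{p_0,\infty}$. This is the genuinely endpoint term, and I expect it to be the key computation.
\item[(iii)] \textbf{Far region, $|x|\ll|y|$.} Here $|K|\lesssim |y|^{-n}(|y|/|x|)^{\theta}$, giving the dual Hardy operator $|x|^{-\theta}\int_{|y|\gtrsim|x|}|y|^{-n+\theta}|f(y)|\,dy$. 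This is bounded on $L^p$ for $p<n/\theta$, in particular at $p_0$ in strong form, using $p_0<2<q_0$.
\end{enumerate}

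For $S_L$ I will run the same scheme with the vector-valued kernel $t\mapsto K_t(x,y)$ in $L^2(dt/t)$. The pointwise $L^2(dt/t)$-norm of the kernel obeys the same bounds as in (ii) and (iii), via Minkowski's inequality, and the near-diagonal part (i) uses the $L^2$ boundedness of $S_L$ from \ref{A1CONDI}.

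I expect the main obstacle to be part (i). Without kernel regularity there is no Calderón--Zygmund decomposition, so the local part must be handled by $L^2$ theory plus localisation. There I would try the Blunck--Kunstmann approach restricted to annuli, where the weights are harmless and off-diagonal decay of order $n+\epsilon$ is available.
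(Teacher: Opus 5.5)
Your far-region analysis is correct and isolates the endpoint cleanly. In (ii), the bound $|K(x,y)|\lesssim|x|^{-n}(|x|/|y|)^{\sigma}$, the Lorentz--H\"older inequality $\int|y|^{-\sigma}|f|\le\|f\|_{L^{p_0,1}}\||\cdot|^{-\sigma}\|_{L^{p_0',\infty}}$ and $|x|^{\sigma-n}\in L^{p_0,\infty}$ give exactly $L^{p_0,1}\to L^{p_0,\infty}$; the same inequality drives Lemma~\ref{lem: T_t: L^{p_0,1} to L^q}. Part (iii) is a dual Hardy operator, bounded on $L^p$ for $1\le p<q_0$.

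The genuine gap is (i). It contains the whole difficulty of the theorem, namely that the kernel has no regularity, and the reasoning you give for it fails:
\begin{itemize}
\item Localising to an annulus and applying H\"older only turns $L^2\to L^2$ into $L^2\to L^p$, not $L^p\to L^p$ for $p<2$.
\item The size bound $|K(x,y)|\lesssim|x-y|^{-n}$ plus $L^2$-boundedness does not by itself give any bound for $p\neq 2$ without a substitute for regularity.
\item Even a bound for $p$ near $2$ would not reach $p_0=n/(n-\sigma)$, which tends to $1$ as $\sigma\to 0$.
\end{itemize}
What is needed is weak type $(1,1)$ of the localised pieces, uniformly in the annulus. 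That is a complete Duong--McIntosh/Blunck--Kunstmann argument, which you only announce.

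That argument also cannot be run on your truncated operator $T_{\mathrm{near}}$. Its engine is the identity $\mathcal M(L)(I-e^{-r^\alpha L})^m=\int_0^\infty\int_{[0,r^\alpha]^m}m(t)L^{m+1}e^{-(t+|\mathbf u|)L}\,d\mathbf u\,dt$, whose kernel is controlled by Lemma~\ref{lemma: kernel of p_k,t}. This requires a function of $L$, which $T_{\mathrm{near}}$ is not. Nor do the weights disappear on annuli, contrary to your remark. The averages $e^{-r^\alpha L}b_i$ spread over all of $\mathbb R^n$ with the factor $(1+r/|x|)^{\theta}$ near the origin, and times $t^{1/\alpha}\gtrsim 2^j$ carry the growth $(t^{1/\alpha}/2^j)^{\theta+\sigma}$. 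Note too that a Calder\'on--Zygmund decomposition of $f$ is always available. What regularity normally supplies is the estimate of the bad part, and the splitting $b_i=(I-(I-e^{-r_i^\alpha L})^m)b_i+(I-e^{-r_i^\alpha L})^mb_i$ is the substitute for it.

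To close (i), localise $\mathcal M(L)$ itself. With $A_j=\{2^j\le|y|<2^{j+1}\}$ and $A_j^*=\{2^{j-1}\le|x|<2^{j+2}\}$, write $\chi_{A_j^*}T_{\mathrm{near}}\chi_{A_j}=\chi_{A_j^*}\mathcal M(L)\chi_{A_j}-\chi_{A_j^*}T_{\mathrm{far}}\chi_{A_j}$. The last operator has kernel $\lesssim 2^{-jn}$ on $A_j^*\times A_j$, so it is bounded on every $L^p$ uniformly in $j$. Then prove weak type $(1,1)$ of $\chi_{A_j^*}\mathcal M(L)\chi_{A_j}$ uniformly in $j$:
\begin{itemize}
\item One may assume $\lambda\gg 2^{-jn}\|f\|_{L^1}$, so all Calder\'on--Zygmund cubes have side $\ll 2^j$ and lie where $|y|\approx 2^j$.
\item Estimate $h_1$ in $L^2$ by duality against $M_s\phi$ with $q_0'<s<2$; this is where $\theta<n/2$ enters.
\item Estimate $h_2$ in $L^1(A_j^*\setminus 2Q_i)$. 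For $t\gtrsim 2^{j\alpha}$, the growth $(t^{1/\alpha}/2^j)^{\theta+\sigma}$ is absorbed by the factor $(r_i^\alpha/t)^m$ coming from $(I-e^{-r_i^\alpha L})^m$.
\end{itemize}
Interpolation with $L^2$ and the bounded overlap of the $A_j^*$ then give strong type at $p_0$. With this your route appears to go through, and it has the merit of confining the endpoint to the explicit Hardy operator in (ii).

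The paper never splits the kernel. It reduces to restricted weak type, decomposes $\chi_E$ directly in $L^{p_0}$, and runs the semigroup approximation on the untruncated $\mathcal M(L)$. The endpoint enters through the Lorentz pairings in Lemmas~\ref{lem: T_t: L^{p_0,1} to L^q} and~\ref{lem: adjoint_operator_analogue}.

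Finally, deriving the kernel bound for $tLe^{-tL}$ by Cauchy's formula would need bounds at complex times. \ref{A2CONDI} does not give these and \ref{A1CONDI} cannot supply them; the paper imports the bound as Lemma~\ref{lemma: kernel of p_k,t}.
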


\begin{remark}
\normalfont
The \(L^p\) boundedness of \(\mathcal{M}(L)\) for \(p_0 < p < q_0\) can be obtained from \cite{BK}, but the endpoint case \(p = p_0\) is not included. Theorem~\ref{THEMML}, together with the \(L^2\) boundedness of \(\mathcal{M}(L)\) implies that \(\mathcal{M}(L)\) is bounded on \(L^p\) for \(p_0 < p \leq 2\) by interpolation. By duality, we further obtain the boundedness on \(L^p\) for \(2 < p < q_0\). 
%Theorem~\ref{THEMML} provides the endpoint estimate at \(p = p_0\). Moreover, 

%Then \((\mathcal M(L))^{*}=\mathcal M(L^{*})\). Hence \(\mathcal M(L)\) is bounded on \(L^{p}\) for \(p_0<p<q_0\), since \(\mathcal M(L^{*})\) is bounded on \(L^{p}\) for \(p_0<p\leq 2\). An alternative approach to establishing the \(L^{p}\) boundedness of \(\mathcal M(L)\) for \(p_0<p<q_0\) can be found in \cite{A} and \cite{Y}.
\end{remark}
    
We emphasise that we adapt the techniques in \cite{L} with necessary modifications, leading to endpoint estimates in our framework. The approach in \cite{L} used the finite speed propagation property of the Schr\"odinger operator with the inverse-square potential. However, it is not directly applicable to establishing the desired results for \(S_L\) and \(\mathcal M(L)\) in this paper, since 
%\(L\) does not satisfy the Davies--Gaffney estimates, which are equivalent to 
\(L\) does not satisfy the finite speed propagation property.
%; see \cite{CS}.
%although Lai \cite{L} and we both obtain the endpoint estimate \(L^{p_0,1}\rightarrow L^{p_0,\infty}\), the approaches and the operators we studied are different. Lai's approaches in \cite{L} are related to the finite speed propagation property of the Schr\"odinger operator with the inverse-square potential.
%\(\mathcal L_a\). %We combines the ideas from \cite{L} and \cite{Y}, 

We primarily exploit the precise heat kernel upper bound of \(L\) in \ref{A2CONDI} to derive key \(L^{p_0,1}\rightarrow L^{p}\)  estimates for operators related to the semigroup \(e^{-tL}\), thereby bypassing the use of the finite speed propagation property. As applications, we obtain the endpoint estimates for the vertical square function and the functional calculus of Laplace transform type associated with both Kolmogorov operators \(\Lambda_\kappa\) and Hardy operators \(L_{a}\).

	The remainder of this paper is organised as follows. In Section~\ref{Pre}, we present several lemmas that will be used frequently throughout the paper. The proofs of Theorems~\ref{thm: square function} and~\ref{THEMML} are given in Sections~\ref{section: square operators} and~\ref{sec proof of ML}, respectively.
	
	Throughout this paper, $``C"$ denotes a positive constant, which is independent of the essential variables, and may vary across different occurrences. We write $X \lesssim Y$ to indicate the existence of a constant $C$ such that $X \leq CY$. If we write $X \approx Y$, then both $X \lesssim Y$ and $Y \lesssim X$ hold.

	\section{Preliminaries}\label{Pre}
	
	For constants $1 \leq p<\infty, 1 \leq q \leq \infty$, the Lorentz space $L^{p, q}(\mathbb{R}^n)$ is defined as the subset of measurable function space on $\mathbb{R}^n$ equipped with the norm:
	
	$$
	\|f\|_{L^{p, q}(\mathbb{R}^n)}= \begin{cases}\left(\int_0^{\infty}\left(t^{1/p} f^*(t)\right)^q \frac{d t}{t}\right)^{1/q} & \text { if } q<\infty, \\ \sup _{t>0} t^{1/p} f^*(t) & \text { if } q=\infty ,\end{cases}
	$$
	where \(f^*\) is the decreasing
	rearrangement of the function \(f\).
	
	By the above definitions, it is direct to observe that $L^{p, p}(\mathbb{R}^n)$ and $L^{\infty, \infty}(\mathbb{R}^n)$ coincide with the spaces $L^p(\mathbb{R}^n)$ and $L^\infty(\mathbb{R}^n)$, respectively. $L^{p, \infty}(\mathbb{R}^n)$ denotes the weak $L^{p}(\mathbb{R}^n)$ space. Moreover, the definition implies that \( \|\chi_E\|_{L^{p,1}(\mathbb{R}^n)}=\|\chi_E\|_{L^{p}(\mathbb{R}^n)} \) for any measurable set \(E\subset \mathbb{R}^n\), where \(\chi_E\) denotes the characteristic function of the set \(E\).

	It is well-known that the H\"older's inequality also holds for Lorentz space, for $1<p\leq \infty$,
	\begin{equation*}
		\int_{\mathbb{R}^n} |f(x)||g(x)|dx\leq
		\|f\|_{L^{p, \infty}(\mathbb{R}^n)}
		\|g\|_{L^{p', 1}(\mathbb{R}^n)},
	\end{equation*}
	where the conjugate index of \(\infty\) is \(1\); see, for example, \cite{KS2010}.

	For any locally integrable function \(f\) and measurable set \(E\subset \mathbb R^n\), we write the average of the function \(f\) as
	\[\fint_{E}|f|=\frac{1}{|E|}\int_{E}|f|.\]
	
	In this paper, we consider the Hardy--Littlewood \(s\)-maximal operator \(M_{s}\), \(1\leq s<\infty\), which is defined by
	\begin{align*}
		M_{s}f(x)&=\sup_{Q\ni x} \Big( \fint_{Q}|f(y)|^sdy
		\Big)^{1/s},	
	\end{align*}
	where \(Q\) denotes the cube in \(\mathbb R^n\).
	When \(s=1\), \(M_1\) is the Hardy--Littlewood maximal function $M$.
	Note that  $M$ is bounded on $L^p(\mathbb{R}^n)$ for \(1<p<\infty\) (see \cite{Gr1}).
	Then, it is obvious from the boundedness of \(M\) that \(M_s\) is \(L^p\)-bounded for any \(s<p<\infty\).

	Let \(p_{k,t}(x,y)\) denote the kernel of the operator \((tL)^{k}e^{-tL} \) for $k\geq 0$ and $t>0$. Then the following estimate holds for \(p_{k,t}(x,y)\).  A similar statement and its proof can be found in \cite{BDM}; We omit the proof. 
	\begin{lemma}\label{lemma: kernel of p_k,t}
		%\textup{(\cite{BDM})}
		For each $k\in \mathbb N =\{0,1,\ldots\}$ and $\tilde \epsilon \in (0,\epsilon)$, $\epsilon>0$, we have
		\begin{equation}\label{eq: heat_kernel p_{k,t}}
			|p_{k,t}(x,y)|\lesssim t^{-n/\alpha}\Big(\f{t^{1/\alpha}+|x-y|}{t^{1/\alpha}}\Big)^{-n-\tilde\epsilon} \Big(1+\f{t^{1/\alpha}}{|x|}\Big)^\theta\Big(1+\f{t^{1/\alpha}}{|y|}\Big)^\sigma.
		\end{equation}
	\end{lemma}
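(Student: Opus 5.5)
We need to prove Lemma \ref{lemma: kernel of p_k,t}: the bound \eqref{eq: heat_kernel p_{k,t}} for the kernel of $(tL)^k e^{-tL}$. The plan is to use analyticity of the semigroup together with the Cauchy integral formula. First we extend the pointwise bound \eqref{eq:heat-kernel-bounds0} from real times $t>0$ to complex times $z$ in a smaller sector $\{|\arg z|<\nu\}$ with $0<\nu<\pi/2-\omega$, at the price of replacing $\epsilon$ by any $\tilde\epsilon<\epsilon$. Then we differentiate in $t$:
\[
(tL)^k e^{-tL}=(-1)^k t^k \frac{d^k}{dt^k}e^{-tL},
\]
so that
\[
p_{k,t}(x,y)=(-1)^k t^k \frac{k!}{2\pi i}\oint_{|z-t|=\delta t}\frac{p_z(x,y)}{(z-t)^{k+1}}\,dz ,
\]
where $\delta=\delta(\nu)$ is small enough that the circle stays inside the sector. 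On that circle $|z|\approx \operatorname{Re} z\approx t$, so once the complex-time bound is known with $t^{1/\alpha}$ replaced by $|z|^{1/\alpha}$ (or by $(\operatorname{Re}z)^{1/\alpha}$), each factor in \eqref{eq:heat-kernel-bounds0} is comparable to its value at time $t$. The Cauchy integral then contributes $t^k\cdot (\delta t)^{-k}$, which is bounded by a constant and gives \eqref{eq: heat_kernel p_{k,t}}. The case $k=0$ is exactly \ref{A2CONDI}, so only $k\ge1$ needs this argument.

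The main obstacle is the extension of the kernel bound to complex times. My approach is a Phragmén--Lindelöf-type interpolation for the function
\[
F(z)=p_z(x,y),
\]
with $x,y$ fixed and $x,y\neq 0$, in the spirit of Davies' argument and of \cite{BDM}.

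1. \emph{Crude bound on the full sector.} Write $z=t/2+w$, so that
\[
p_z(x,y)=\int p_{t/2}(x,u)\,(e^{-wL}p_{t/2}(\cdot,y))(u)\,du .
\]
The condition $n/(n-\sigma)<2<n/\theta$ guarantees that $p_{t/2}(x,\cdot)$ and $p_{t/2}(\cdot,y)$ lie in $L^2$: the singular factors $|u|^{-\theta}$ and $|u|^{-\sigma}$ have exponents below $n/2$. Using $\|e^{-wL}\|_{2\to2}\lesssim 1$, this gives $|F(z)|\lesssim (\operatorname{Re} z)^{-n/\alpha}$ times the weight factors $(1+(\operatorname{Re}z)^{1/\alpha}/|x|)^\theta(1+(\operatorname{Re}z)^{1/\alpha}/|y|)^\sigma$, valid on any sector strictly inside $|\arg z|<\pi/2-\omega$.

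2. \emph{Interpolation.} On the positive real axis we have the full bound \eqref{eq:heat-kernel-bounds0}, which carries the decay factor $\big((t^{1/\alpha}+|x-y|)/t^{1/\alpha}\big)^{-n-\epsilon}$. We interpolate between this and the crude bound via the Phragmén--Lindelöf / three-lines lemma for the weighted function
\[
G(z)=z^{n/\alpha}F(z)\,\big(\text{analytic version of the polynomial decay factor}\big)^{-1}.
\]
In the cone $|\arg z|\le\nu$, this yields a fraction $1-\eta$ of the decay exponent, with $\eta\to0$ as $\nu\to0$. Choosing $\nu$ so that $(n+\epsilon)(1-\eta)\ge n+\tilde\epsilon$ gives the decay $\big((|z|^{1/\alpha}+|x-y|)/|z|^{1/\alpha}\big)^{-n-\tilde\epsilon}$ for complex $z$. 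The weight factors in $|x|,|y|$ are handled the same way, since they are comparable at complex and real times of the same modulus.

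The polynomial (rather than Gaussian) decay means one cannot use the standard $e^{-|x-y|^2/z}$ trick. Instead I would use a standard lemma on functions analytic in a sector that are bounded by $A$ everywhere and by $A\,\phi(|z|)$ on the real axis, which gives the bound $A\,\phi^{1-\eta}$ in the subsector. This interpolation step is exactly where $\epsilon$ is lost and replaced by $\tilde\epsilon$, which is why the lemma is stated for $\tilde\epsilon<\epsilon$.
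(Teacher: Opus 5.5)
The paper gives no proof of this lemma; it omits it and refers to \cite{BDM}. Your plan is the standard argument for such bounds, and the loss $\tilde\epsilon<\epsilon$ in the statement is the typical trace of that argument. It consists of a crude $L^2$ bound on a sector via $e^{-zL}=e^{-sL}e^{-wL}e^{-sL}$ with $s$ a small multiple of $\operatorname{Re} z$, then a Phragm\'en--Lindel\"of (harmonic-measure) interpolation against the real-axis bound that costs $\epsilon-\tilde\epsilon$, then Cauchy's formula for $\partial_t^k p_t$. It is correct in outline.

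The one step to state more carefully is the interpolation lemma. As you phrase it (bounded by $A$ on the sector and by $A\phi(|z|)$ on the axis implies $A\phi^{1-\eta}$ on a subsector), it is false for arbitrary $\phi$. It works here because $s\mapsto\log\phi(e^{s})$, and likewise the logarithms of the weight factors, is Lipschitz in $s=\log|z|$ with a constant independent of $|x-y|$. Consequently the harmonic-measure bound on $\{0<\arg z<\mu\}$ gives $|p_z(x,y)|\lesssim |z|^{-n/\alpha}\phi(|z|)^{1-|\arg z|/\mu}(1+|z|^{1/\alpha}/|x|)^{\theta}(1+|z|^{1/\alpha}/|y|)^{\sigma}$ uniformly in $x,y$.
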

%\begin{remark}
%\normalfont
   
%\end{remark}

	For \(k=0\), we denote \(p_t(x,y)\) instead of \(p_{0,t}(x,y)\).
	For the kernel \(p_{1,t}(x,y)\) of the operator \(tLe^{-tL}\), we write
	\begin{align*}
		S_{L}f(x) =\Big(\int_{0}^{\infty} |tLe^{-tL}f(x)|^2 \frac{dt}{t}\Big)^{1/2} = \Big( \int_{0}^{\infty} \big| \int_{\mathbb R^n} p_{1,t}(x,y) f(y)dy \big|^2  \frac{dt}{t}\Big)^{1/2}.
	\end{align*}

	For $t>0$, let $T_t(x,y)$ be the function satisfying the estimates as in \eqref{eq:heat-kernel-bounds0} and $T_t$ be the operator defined by
	\[
	T_tf(x) = \int_{\Rn} T_t(x,y)f(y)dy.
	\]

Recall that \(p_0:=n/(n-\sigma)\) and \(q_0:=n/\theta\).
    Let \(p,\ q\) be any real numbers with \(p_0<p\leq q<q_0\). %\[p_0:=\frac{n}{n-\sigma}<p\leq q<\frac{n}{\theta}=:q_0.\]
Then, we present some crucial lemmas in the following which will be used in the proofs of the theorems.

	\begin{lemma}\label{lem: T_t: L^{p_0,1} to L^q}  For a cube $Q$  with side length $r_Q$, denote $S_j(Q)=2^jQ\backslash 2^{j-1}Q$ for any integer $j\geq 2$. Then for all \(p,q\) satisfying \(p_0<p\leq q<q_0\), and for all \(f\in L^p(\mathbb R^n)\) supported in \(Q\), we have
		\begin{equation}\label{eq1-aim3}
			\begin{aligned}
				\Big(\fint_{S_j(Q)}|T_tf|^q\Big)^{1/q}&\lesssim \max\Big\{\Big(\f{r_Q}{t^{1/\alpha}}\Big)^{n/p_0}, \Big(\f{r_Q}{t^{1/\alpha}}\Big)^{n}\Big\}\Big(1+\f{t^{1/\alpha}}{2^jr_Q}\Big)^{n/q}\\
				&\qquad \cdot\Big(1+\f{2^jr_Q}{t^{1/\alpha}}\Big)^{-n-\epsilon} \frac{1}{|Q|^{1/p_0}} \|f\|_{L^{p_0,1}(Q)}.
			\end{aligned}
		\end{equation}
		
	\end{lemma}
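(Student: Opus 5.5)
The plan is to bound $T_tf(x)$ pointwise for $x\in S_j(Q)$, split the bound into an $x$-part and a $y$-part, and treat these separately: the $y$-integral by H\"older's inequality in Lorentz spaces, and the $x$-part by local integrability of $|x|^{-\theta q}$.

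\emph{Pointwise bound.} For $x\in S_j(Q)$ with $j\ge 2$ and $y\in Q$ we have $|x-y|\approx 2^jr_Q$. The kernel bound \eqref{eq:heat-kernel-bounds0} for $T_t(x,y)$ then gives
\[
|T_tf(x)|\lesssim t^{-n/\alpha}\Big(1+\f{2^jr_Q}{t^{1/\alpha}}\Big)^{-n-\epsilon}\Big(1+\f{t^{1/\alpha}}{|x|}\Big)^{\theta}\int_Q\Big(1+\f{t^{1/\alpha}}{|y|}\Big)^{\sigma}|f(y)|\,dy .
\]

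\emph{The $y$-integral.} I will use $(1+t^{1/\alpha}/|y|)^\sigma\lesssim 1+t^{\sigma/\alpha}|y|^{-\sigma}$ together with the Lorentz H\"older inequality with exponents $p_0$ and $p_0'=n/\sigma$. Two facts are needed. First, $\|\chi_Q\|_{L^{n/\sigma,\infty}}\approx |Q|^{\sigma/n}$. Second, $|y|^{-\sigma}\in L^{n/\sigma,\infty}(\mathbb R^n)$ with norm bounded by a constant, regardless of where $Q$ sits. Together these give
\[
\int_Q\Big(1+\f{t^{1/\alpha}}{|y|}\Big)^{\sigma}|f(y)|\,dy\lesssim \big(r_Q^{\sigma}+t^{\sigma/\alpha}\big)\|f\|_{L^{p_0,1}(Q)}.
\]
Multiplying by $t^{-n/\alpha}$ and writing $|Q|^{-1/p0}=r_Q^{-(n-\sigma)}$, we get
\[
t^{-n/\alpha}r_Q^\sigma=|Q|^{-1/p_0}\Big(\f{r_Q}{t^{1/\alpha}}\Big)^{n},\qquad t^{(\sigma-n)/\alpha}=|Q|^{-1/p_0}\Big(\f{r_Q}{t^{1/\alpha}}\Big)^{n/p_0}.
\]
The sum of these two terms is bounded by twice the maximum appearing in \eqref{eq1-aim3}.

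\emph{The $x$-part.} It remains to show
\[
\Big(\fint_{S_j(Q)}\Big(1+\f{t^{1/\alpha}}{|x|}\Big)^{\theta q}dx\Big)^{1/q}\lesssim \Big(1+\f{t^{1/\alpha}}{2^jr_Q}\Big)^{n/q}.
\]
Since $q<q_0=n/\theta$, the function $|x|^{-\theta q}$ is locally integrable. Its integral over any set of measure $V\approx(2^jr_Q)^n$ is maximised when the set is a ball centred at the origin, and is therefore $\lesssim (2^jr_Q)^{n-\theta q}$. This controls the case where $Q$ is far from the origin as well. Hence the average is $\lesssim 1+(t^{1/\alpha}/2^jr_Q)^{\theta q}$. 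Taking the $q$-th root gives the factor $(1+t^{1/\alpha}/2^jr_Q)^{\theta}$, and since $\theta<n/q$ this is at most $(1+t^{1/\alpha}/2^jr_Q)^{n/q}$.

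Combining the three steps yields \eqref{eq1-aim3}; the exponent $p$ plays no role beyond ensuring that $T_tf$ is defined. The step needing most care is the $y$-integral: the bound must be uniform in the position of $Q$ relative to the origin, and this is exactly what the global weak-type membership of $|y|^{-\sigma}$ in $L^{n/\sigma,\infty}$ provides.
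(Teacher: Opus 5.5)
Your proof is correct, and it takes a genuinely different route from the paper's. The paper declares that $Q$ may be taken centred at the origin, splits the $y$-integral over the cube $\tilde Q$ of side $t^{1/\alpha}$ centred at $0$ and its complement, and treats $t^{1/\alpha}\le r_Q$ and $t^{1/\alpha}>r_Q$ separately. The first case yields $(r_Q/t^{1/\alpha})^{n}$ via $1+t^{1/\alpha}/|y|\lesssim r_Q/|y|$, which uses $|y|\lesssim r_Q$. The second yields $(r_Q/t^{1/\alpha})^{n/p_0}$ via $1+t^{1/\alpha}/|y|\approx t^{1/\alpha}/|y|$. The $x$-factor is handled pointwise, using $|x|\approx 2^jr_Q$ on $S_j(Q)$ and $\theta q<n$.

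You instead obtain both terms of the maximum at once from $(1+a/|y|)^\sigma\lesssim 1+a^\sigma|y|^{-\sigma}$. You bound the two resulting pieces using $\|\chi_Q\|_{L^{n/\sigma,\infty}}=r_Q^\sigma$ and the global bound on $\|\,|\cdot|^{-\sigma}\|_{L^{n/\sigma,\infty}(\mathbb R^n)}$. You then control the average of $|x|^{-\theta q}$ over $S_j(Q)$ by the bathtub (rearrangement) principle.

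What your route buys is uniformity in the position of $Q$. The weights $(1+t^{1/\alpha}/|x|)^\theta$ and $(1+t^{1/\alpha}/|y|)^\sigma$ are not translation invariant, so the paper's reduction to centred cubes is not actually justified: both $|y|\lesssim r_Q$ and $|x|\approx 2^jr_Q$ fail for cubes away from the origin. Yet Section 3 applies the lemma to arbitrary Calder\'on--Zygmund cubes $Q_j$. Your argument covers that case directly, with no case split, and even gives the slightly better factor $(1+t^{1/\alpha}/(2^jr_Q))^{\theta}$ before you relax the exponent to $n/q$. The paper's case analysis only has the cosmetic advantage of showing which regime of $t$ produces which term of the maximum.
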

	\begin{proof}
		%line 292
		Since the kernel of the operator \(T_{t}\), $t>0$ satisfies the estimate as in \eqref{eq:heat-kernel-bounds0}, we have
		\begin{align*}
			|T_t (f)(x)|&=\Big|\int_{\mathbb R^n} T_t(x,y) f(y)dy   \Big|\\
			&\leq \int_{\mathbb R^n} |T_t(x,y)| |f(y)| dy\\
			&\lesssim \int_{\mathbb{ R}^n} t^{-n/\alpha} \Big(\frac{t^{1/\alpha}+|x-y|}{t^{1/\alpha}}\Big)^{-n-\epsilon} \Big(1+\frac{t^{1/\alpha}}{|x|}\Big)^{\theta}\Big(1+\frac{t^{1/\alpha}}{|y|}\Big)^{\sigma} |f(y)| dy.
		\end{align*}
		Without loss of generality, we assume that $Q$ is centred at \(0\) with side length $r_Q$.
		We decompose \(\mathbb{R}^n \) into \(\tilde{Q}\) and \(\mathbb{R}^n\setminus \tilde{Q}\), where the cube \(\tilde{Q}\) is centred at \(0\) with side length \(t^{1/\alpha}\).
		Then we have
		\begin{align*}
			|T_t (f)(x)| &\leq \int_{\tilde{Q}} t^{-n/\alpha} \Big(\frac{t^{1/\alpha}+|x-y|}{t^{1/\alpha}}\Big)^{-n-\epsilon} \Big(1+\frac{t^{1/\alpha}}{|x|}\Big)^{\theta} \Big(1+\frac{t^{1/\alpha}}{|y|}\Big)^{\sigma} |f(y)| dy\\
			&\quad +  \int_{\mathbb{R}^n \setminus \tilde{Q}}t^{-n/\alpha} \Big(\frac{t^{1/\alpha}+|x-y|}{t^{1/\alpha}}\Big)^{-n-\epsilon} \Big(1+\frac{t^{1/\alpha}}{|x|}\Big)^{\theta} \Big(1+\frac{t^{1/\alpha}}{|y|}\Big)^{\sigma} |f(y)| dy.
		\end{align*}
		
		For \(p_0<p\leq q<q_0\), we further have
		\begin{align*}
			\|T_t (f)\|_{L^{q}(S_j(Q))}  \leq \text{Term} \mathrm{I_1}+\text{Term} \mathrm{I_2},
		\end{align*}
		where
		\begin{align*}
			\text{Term}
			\mathrm{I_1} := \Big\|\int_{\tilde{Q}} t^{-n/\alpha} \Big(1+\frac{|\cdot-y|}{t^{1/\alpha}}\Big)^{-n-\epsilon} \Big(1+\frac{t^{1/\alpha}}{|\cdot|}\Big)^{\theta} \Big(1+\frac{t^{1/\alpha}}{|y|}\Big)^{\sigma} |f(y)| dy\Big\|_{L^q(S_{j}(Q))}
		\end{align*}
		and
		\begin{align*}
			\text{Term} \mathrm{I_2}:= \Big\|\int_{\mathbb{R}^n \setminus \tilde{Q}}t^{-n/\alpha} \Big(1+\frac{|\cdot-y|}{t^{1/\alpha}}\Big)^{-n-\epsilon} \Big(1+\frac{t^{1/\alpha}}{|\cdot|}\Big)^{\theta} \Big(1+\frac{t^{1/\alpha}}{|y|}\Big)^{\sigma} |f(y)| dy\Big\|_{L^q(S_{j}(Q))}.
		\end{align*}
		
		For all \(f\in L^{p}(\mathbb{R}^n)\) supported in \(Q\), we consider the following two cases.
		
		\textbf{Case 1:} \(t^{1/\alpha}\leq r_{Q}\).
		Now we estimate \(\text{Term}\mathrm{I_1} \). For any $y \in \tilde{Q}$ and $x \in S_{j}(Q)$, since \(|y|\leq t^{1/\alpha} \), we get \(1+\frac{t^{1/\alpha}}{|y|} \approx \frac{t^{1/\alpha}}{|y|} \) and \(|x-y|\approx 2^{j}r_{Q}\) for \(j\ge 2\). We have
		\begin{equation*}%\label{eq: TermI_1}
			\begin{aligned}
				\text{Term} \mathrm{I_1}
				&\approx t^{-n/\alpha} \Big(1+\frac{2^{j}r_Q}{t^{1/\alpha}}\Big)^{-n-\epsilon}
				\Big( \int_{S_{j}(Q)} \Big| \int_{\tilde{Q}}   \Big(1+\frac{t^{1/\alpha}}{|x|}\Big)^{\theta} \Big(\frac{t^{1/\alpha}}{|y|}\Big)^{\sigma} |f(y)| dy
				\Big|^q dx \Big)^{1/q}\\
				&=t^{-n/\alpha} \Big(1+\frac{2^{j}r_{Q}}{t^{1/\alpha}}\Big)^{-n-\epsilon} \Big(\int_{S_{j}(Q)} \Big(1+\frac{t^{1/\alpha}}{|x|}\Big)^{\theta q} \Big(\int_{\tilde{Q}} \Big(\frac{t^{1/\alpha}}{|y|}\Big)^{\sigma} |f(y)| dy \Big)^{q} dx \Big)^{1/q}\\
				&\leq t^{-n/\alpha} \Big(1+\frac{2^{j}r_{Q}}{t^{1/\alpha}}\Big)^{-n-\epsilon} (r_{Q})^{\sigma} \Big(
				\int_{S_{j}(Q)} \Big(1+\frac{t^{1/\alpha}}{|x|}\Big)^{\theta q} \Big(\int_{\tilde{Q}}  \frac{|f(y)|}{|y|^{\sigma}}  dy \Big)^q dx
				\Big)^{1/q}.
			\end{aligned}
		\end{equation*}

		Recall that \(q<q_{0}= \frac{n}{\theta}\). Since \(x\in S_{j}(Q)=2^{j}Q\setminus 2^{j-1}Q\) for \(j\ge 2\), we have \(|x|\approx 2^{j}r_Q\) for \(j\ge 2\) and we further obtain
		\begin{equation}\label{eq: TermI_2}
			\begin{aligned}
				\text{Term} \mathrm{I_1}
				&  \leq \frac{(r_{Q})^{\sigma}}{t^{n/\alpha}} \Big(1+\frac{2^{j}r_{Q}}{t^{1/\alpha}}\Big)^{-n-\epsilon}
				\Big(
				\int_{S_{j}(Q)} \Big(1+\frac{t^{1/\alpha}}{|x|}\Big)^{n} \Big(\int_{\tilde{Q}}  \frac{|f(y)|}{|y|^{\sigma}}  dy \Big)^q dx
				\Big)^{1/q}\\
				&\lesssim \frac{(r_{Q})^{\sigma}}{t^{n/\alpha}}\Big(1+\frac{2^{j}r_{Q}}{t^{1/\alpha}}\Big)^{-n-\epsilon}  \Big(1+\frac{t^{1/\alpha}}{2^j r_Q}\Big)^{n/q} |S_{j}(Q)|^{1/q}
				\int_{\tilde{Q}}  \frac{|f(y)|}{|y|^{\sigma}}  dy.
			\end{aligned}
		\end{equation}
		
		The H\"older's inequality gives
		\begin{equation}\label{eq: TermI_3}
			\begin{aligned}
				\int_{\tilde{Q}}  \frac{|f(y)|}{|y|^{\sigma}}  dy \leq
				\|f\|_{L^{p_0,1}(Q)}\left\|   \frac{1}{|\cdot|^{\sigma}}\right\|_{L^{p_0',\infty}(\tilde{Q})},
			\end{aligned}
		\end{equation}
		where \(p_0=\frac{n}{n-\sigma}\) and \(p_0'=\frac{n}{\sigma}\).
		
		Let \(h(y):=\frac{1}{|y|^{\sigma}}\). By \cite[Proposition 1.4.5. (16)]{Gr1}, we have
		\begin{equation}\label{eq: TermI_4}
			\begin{aligned}
				\|h\|_{L^{p_0',\infty}(\tilde{Q})}
				%& = \sup_{\lambda>0} \{\lambda d_{h}^{1/p_{0}'}(\lambda) \}\\
				&=\sup_{\lambda>0} \Big\{ \lambda \Big|\Big\{y\in \tilde{Q}: \frac{1}{|y|^{\sigma}}> \lambda \Big\} \Big|^{1/p_0'}
				\Big\} \\
				&\leq \sup_{\lambda>0} \Big\{\lambda
				\Big(\min\Big\{ t^{1/\alpha},
				\ \frac{1}{\lambda^{1/\sigma}}\Big\}\Big)^{n/p_0'} \Big\}\\
				&\leq C.
			\end{aligned}
		\end{equation}
		Therefore, by combining %\eqref{eq: TermI_1},
		\eqref{eq: TermI_2}, \eqref{eq: TermI_3} and \eqref{eq: TermI_4}, we get
		\begin{align*}
			\text{Term}\mathrm{I_1} &\leq   t^{-n/\alpha} \Big(1+\frac{2^{j}r_{Q}}{t^{1/\alpha}}\Big)^{-n-\epsilon} (r_{Q})^{\sigma} \Big(1+\frac{t^{1/\alpha}}{2^j r_Q}\Big)^{n/q} |S_{j}(Q)|^{1/q}
			\int_{\tilde{Q}}  \frac{|f(y)|}{|y|^{\sigma}}  dy\\
			&\leq C  t^{-n/\alpha} \Big(1+\frac{2^{j}r_{Q}}{t^{1/\alpha}}\Big)^{-n-\epsilon} (r_{Q})^{\sigma} \Big(1+\frac{t^{1/\alpha}}{2^j r_Q}\Big)^{n/q} |S_{j}(Q)|^{1/q}  \|f\|_{L^{p_0,1}(Q)}\\
			&= C \Big( \frac{r_Q}{t^{1/\alpha}} \Big)^{n} \Big(1+\frac{2^{j}r_Q}{t^{1/\alpha}}\Big)^{-n-\epsilon} \Big(1+\frac{t^{1/\alpha}}{2^j r_{Q}}\Big)^{n/q}
			(r_{Q})^{\sigma-n}
			|S_{j}(Q)|^{1/q}   \|f\|_{L^{p_0,1}(Q)}.
		\end{align*}
		
		Since \(r_{Q}=|Q|^{1/n}\) and \(p_0=\frac{n}{n-\sigma}\), we obtain
		\begin{align*}
			\text{Term}\mathrm{I_1}	\lesssim \Big( \frac{r_Q}{t^{1/\alpha}} \Big)^{n} \Big(1+\frac{2^{j}r_Q}{t^{1/\alpha}}\Big)^{-n-\epsilon} \Big(1+\frac{t^{1/\alpha}}{2^j r_{Q}}\Big)^{n/q}
			|Q|^{1/p_0}
			|S_{j}(Q)|^{1/q}   \|f\|_{L^{p_0,1}(Q)}.
		\end{align*}

			We turn to estimate \(\text{Term}\mathrm{I_2}\).  Since \(f(y)\) is supported in \(Q\),  we have \(|y|\leq r_Q\). In the case \(y\in \mathbb{R}^n\setminus \tilde{Q}\) and \(x\in S_{j}(Q)\), we still have \(|x-y|\approx 2^{j} r_{Q}\) for \(j\ge 2\), then we further have
			\begin{equation*}
				%\label{eq: TermII_step1}
				\begin{aligned}
					&\quad\text{Term}\mathrm{I_2}\\ &\approx \Big\|
					\int_{\mathbb{R}^n\setminus \tilde{Q}} t^{-n/\alpha}  \Big(1+\frac{2^{j}r_Q}{t^{1/\alpha}}\Big)^{-n-\epsilon} \Big(1+\frac{t^{1/\alpha}}{|x|}\Big)^{\theta} \Big(1+\frac{t^{1/\alpha}}{|y|}\Big)^{\sigma} |f(y)| dy
					\Big\|_{L^q(S_j(Q))}\\
					&= t^{-n/\alpha} \Big(1+\frac{2^jr_Q}{t^{1/\alpha}}\Big)^{-n-\epsilon}
					\Big(\int_{S_j(Q)}
					\Big|	\int_{\mathbb{R}^n\setminus \tilde{Q}}
					\Big(1+\frac{t^{1/\alpha}}{|x|}\Big)^{\theta} \Big(1+\frac{t^{1/\alpha}}{|y|}\Big)^{\sigma} |f(y)| dy
					\Big|^{q} dx
					\Big)^{1/q}\\
					&\lesssim t^{-n/\alpha} \Big(1+\frac{2^jr_Q}{t^{1/\alpha}}\Big)^{-n-\epsilon} \Big(1+\frac{t^{1/\alpha}}{2^{j}r_Q}\Big)^{n/q} |S_{j}(Q)|^{1/q} \int_{\{y:\ t^{1/\alpha}<|y|\leq r_Q\}} \Big(1+\frac{t^{1/\alpha}}{|y|}\Big)^{\sigma} |f(y)| dy.
				\end{aligned}
			\end{equation*}
			
			Since \(t^{1/\alpha}<|y|\leq r_Q\), we get
			\begin{equation}\label{eq: estimate 1 for t and |y|}
				1+\frac{t^{1/\alpha}}{|y|}\leq \frac{r_Q}{|y|}+\frac{t^{1/\alpha}}{|y|} \lesssim \frac{r_Q}{|y|}.
			\end{equation}
			By inequality \eqref{eq: estimate 1 for t and |y|} and using H\"older's inequality again, we have
			\begin{align*}
				\int_{\{y:\ t^{1/\alpha}<|y|\leq r_Q\}} \Big(1+\frac{t^{1/\alpha}}{|y|}\Big)^{\sigma} |f(y)| dy
				& \lesssim (r_Q)^{\sigma}
				\int_{Q}\frac{1}{|y|^{\sigma}} |f(y)|dy\\
				&\leq (r_Q)^{\sigma} \|f\|_{L^{p_0,1}(Q)} \left\|\frac{1}{|\cdot|^{\sigma}} \right\|_{L^{p_0',\infty}(Q)}\\
				&\lesssim (r_Q)^{\sigma} \|f\|_{L^{p_0,1}(Q)}.
			\end{align*}
			Therefore, we obtain
			\begin{align*}\text{Term}\mathrm{I_2} \lesssim  \Big(\frac{r_Q}{t^{1/\alpha}}\Big)^{n} \Big(1+\frac{2^j r_Q}{t^{1/\alpha}}\Big)^{-n-\epsilon} \Big(1+\frac{t^{1/\alpha}}{2^jr_Q}\Big)^{n/q} |Q|^{-1/p_0} |S_j(Q)|^{1/q} \|f\|_{L^{p_0,1}(Q)}.
			\end{align*}

			\textbf{Case 2:} \(t^{1/\alpha}>r_Q\). We notice that in this case, \(\text{Term}\mathrm{I_2}=0\), since \(Q\subset \tilde{Q}\) and \(f\) is supported in \(Q\). It suffices to estimate \(\text{Term}\mathrm{I_1}\). Since \(0\leq |y|\leq r_Q<t^{1/\alpha}\), we have \(1+\frac{t^{1/\alpha}}{|y|} \approx \frac{t^{1/\alpha}}{|y|}\). Substituting the equivalence \(|x-y| \approx 2^{j} r_Q\) and \(|x|\approx 2^jr_Q\) again, we have
			\begin{align*}\text{Term}\mathrm{I_1} &\approx t^{-n/\alpha+\sigma/\alpha}
				\Big(1+\frac{2^j r_Q}{t^{1/\alpha}}\Big)^{-n-\epsilon}
				\Big(\int_{S_{j}(Q)} \Big|\int_{Q} \Big(1+\frac{t^{1/\alpha}}{|x|}\Big)^{\theta} \frac{1}{|y|^{\sigma}} |f(y)| dy \Big|^q dx \Big)^{1/q}\\
				&\lesssim t^{(\sigma-n)/\alpha}
				\Big(1+\frac{2^jr_Q}{t^{1/\alpha}}\Big)^{-n-\epsilon}
				\Big(1+\frac{t^{1/\alpha}}{2^jr_Q}\Big)^{\theta q_0/q} |S_j(Q)|^{1/q} \int_{Q} \frac{1}{|y|^{\sigma}} |f(y)| dy\\
				&\leq t^{-n/(p_0\alpha)}
				\Big(1+\frac{2^jr_Q}{t^{1/\alpha}}\Big)^{-n-\epsilon} \Big(1+\frac{t^{1/\alpha}}{2^jr_Q}\Big)^{n/q}   |S_j(Q)|^{1/q} \int_{Q} \frac{1}{|y|^{\sigma}} |f(y)|dy.
			\end{align*}
			
			Since the H\"older's inequality gives
			\begin{align*}
				\int_{Q} \frac{1}{|y|^{\sigma}} |f(y)|dy \leq \|f\|_{L^{p_0,1}(Q)} \left\|\frac{1}{|\cdot|^{\sigma}}\right\|_{L^{p_0',\infty}(Q)}\lesssim \|f\|_{L^{p_0,1}(Q)},
			\end{align*}
			we obtain
			\begin{align*}
				\text{Term}\mathrm{I_1}
				&\lesssim t^{-n/(p_0\alpha)}
				\Big(1+\frac{2^jr_Q}{t^{1/\alpha}}\Big)^{-n-\epsilon} \Big(1+\frac{t^{1/\alpha}}{2^jr_Q}\Big)^{n/q}   |S_j(Q)|^{1/q}
				\|f\|_{L^{p_0,1}(Q)}\\
				&= \Big(\frac{r_Q}{t^{1/\alpha}}\Big)^{n/p_0}
				\Big(1+\frac{2^jr_Q}{t^{1/\alpha}}\Big)^{-n-\epsilon} \Big(1+\frac{t^{1/\alpha}}{2^jr_Q}\Big)^{n/q}   |S_j(Q)|^{1/q} \frac{1}{|Q|^{1/p_0}}
				\|f\|_{L^{p_0,1}(Q)},
			\end{align*}
			where the last equality is from \(\frac{1}{|Q|^{1/p_0}}=r_{Q}^{-n/p_0}\).
			Then, by the above estimates for \(\text{Term}\mathrm{I_1}\) and \(\text{Term}\mathrm{I_2}\), together with
			\begin{align*}
				\Big(\fint_{S_j(Q)}|T_tf|^q\Big)^{1/q} & = |S_j(Q)|^{-1/q} \|T_t (f)\|_{L^{q}(S_j(Q))}\\
				& \leq |S_j(Q)|^{-1/q} (\text{Term}\mathrm{I_1}+\text{Term}\mathrm{I_2}),
			\end{align*}
			the inequality \eqref{eq1-aim3} immediately follows.
		\end{proof}

		Denote by $T^*_t$ the adjoint operator of $T_t$. Then we have:
		\begin{lemma}\label{lem: adjoint_operator_analogue}
			Give a cube $Q$ with the side length $r_Q$. Denote $S_j(Q)=2^jQ\backslash 2^{j-1}Q$ for $j\geq 2$. For all \(q_0'<q'\leq p'<p_0'\), and for all \(f\in L^{q'}(S_j(Q))\) supported in \(S_j(Q)\),
			\begin{equation}\label{eq1-lem2.3}
				\begin{aligned}
					\frac{1}{|Q|^{1/p_0'}} \|T_t^{*}f\|_{L^{p_0',\infty }(Q)}
					& \lesssim \max\Big\{\Big(\f{2^jr_Q}{t^{1/\alpha}}\Big)^n,\Big(\f{2^jr_Q}{t^{1/\alpha}}\Big)^{n/q'} \Big\} \Big(1+\f{t^{1/\alpha}}{r_Q}\Big)^{n/p_0'}\\
					&\qquad \cdot\Big(1+\f{2^jr_Q}{t^{1/\alpha}}\Big)^{-n-\epsilon} \Big(\fint_{S_j(Q)}|f|^{q'}\Big)^{1/{q'}}.
				\end{aligned}
			\end{equation}
			In addition, for $j=0,1$ and $t \approx r_Q^\alpha$ we have
			\begin{equation}\label{eq2-lem2.3}
				\frac{1}{|Q|^{1/p_0'}} \|T_t^{*}f\|_{L^{p_0',\infty }(Q)}
				\lesssim  \Big(\fint_{S_j(Q)}|f|^{q'}\Big)^{1/{q'}}.
			\end{equation}
		\end{lemma}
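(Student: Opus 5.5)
The natural route is duality with Lemma~\ref{lem: T_t: L^{p_0,1} to L^q}, but a direct kernel estimate is just as short, so I would argue directly. The kernel of $T_t^*$ is $\overline{T_t(y,x)}$, so for $x\in Q$,
\[
|T_t^*f(x)|\lesssim t^{-n/\alpha}\int_{S_j(Q)}\Big(1+\f{|x-y|}{t^{1/\alpha}}\Big)^{-n-\epsilon}\Big(1+\f{t^{1/\alpha}}{|y|}\Big)^{\theta}\Big(1+\f{t^{1/\alpha}}{|x|}\Big)^{\sigma}|f(y)|\,dy .
\]
As in the previous lemma, assume $Q$ is centred at $0$. For $j\ge2$, $x\in Q$ and $y\in S_j(Q)$ we have $|x-y|\approx|y|\approx 2^jr_Q$. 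The decay factor therefore comes out as $(1+2^jr_Q/t^{1/\alpha})^{-n-\epsilon}$, and the $y$-weight is bounded by $(1+t^{1/\alpha}/(2^jr_Q))^{\theta}$. Hölder in $y$ gives
\[
\int_{S_j}|f|\le |S_j(Q)|\Big(\fint_{S_j}|f|^{q'}\Big)^{1/q'},
\]
which leaves the $x$-dependence only in $(1+t^{1/\alpha}/|x|)^{\sigma}$.

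Next I take the $L^{p_0',\infty}(Q)$ norm of $x\mapsto(1+t^{1/\alpha}/|x|)^\sigma$. This is the key point: $|x|^{-\sigma}\in L^{n/\sigma,\infty}$ with a uniform bound, as in \eqref{eq: TermI_4}, and $p_0'=n/\sigma$. So
\[
\|(1+t^{1/\alpha}/|\cdot|)^\sigma\|_{L^{p_0',\infty}(Q)}\lesssim |Q|^{1/p_0'}+t^{\sigma/\alpha}.
\]
Dividing by $|Q|^{1/p_0'}=r_Q^{\sigma}$ gives the factor $1+(t^{1/\alpha}/r_Q)^{\sigma}\approx(1+t^{1/\alpha}/r_Q)^{n/p_0'}$.

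Collecting everything, the bound is
\[
t^{-n/\alpha}(2^jr_Q)^n\Big(1+\f{t^{1/\alpha}}{2^jr_Q}\Big)^{\theta}\Big(1+\f{t^{1/\alpha}}{r_Q}\Big)^{n/p_0'}\Big(1+\f{2^jr_Q}{t^{1/\alpha}}\Big)^{-n-\epsilon}\Big(\fint_{S_j}|f|^{q'}\Big)^{1/q'}.
\]
I then split into two cases.
\begin{itemize}
\item If $t^{1/\alpha}\le 2^jr_Q$, the $\theta$-factor is $\approx1$, and the prefactor is $(2^jr_Q/t^{1/\alpha})^n$.
\item If $t^{1/\alpha}>2^jr_Q$, the prefactor is $(2^jr_Q/t^{1/\alpha})^{n-\theta}$. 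Since $q'>q_0'$ means $n/q'<n-\theta$, and the ratio is $<1$, this is $\le(2^jr_Q/t^{1/\alpha})^{n/q'}$.
\end{itemize}
In both cases the prefactor is bounded by the maximum appearing in \eqref{eq1-lem2.3}, which proves it.

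For \eqref{eq2-lem2.3} ($j=0,1$, $t\approx r_Q^\alpha$), the separation $|x-y|\approx 2^jr_Q$ fails, so I simply drop the decay factor and use $t^{-n/\alpha}\approx|Q|^{-1}$. The $y$-weight $(1+r_Q/|y|)^\theta$ is no longer bounded and must be handled by Hölder: it lies in $L^{q}(2^jQ)$ with norm $\lesssim|Q|^{1/q}$, because $\theta q<n$ when $q<q_0$. Hence
\[
\int(1+r_Q/|y|)^\theta|f|\lesssim |Q|\Big(\fint|f|^{q'}\Big)^{1/q'}.
\]
The $x$-factor is handled by the weak-norm bound from the previous paragraph, giving $\lesssim|Q|^{1/p_0'}$ when $t^{1/\alpha}\approx r_Q$. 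Combining the two gives \eqref{eq2-lem2.3}.

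The main obstacle is keeping the exponent bookkeeping consistent in the $t^{1/\alpha}>2^jr_Q$ case, where the condition $q'>q_0'$ must be used to convert the power $n-\theta$ into $n/q'$.
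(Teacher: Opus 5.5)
Your proposal is correct and is essentially the paper's argument. The paper's ``duality'' reduction just proves the $\theta\leftrightarrow\sigma$-swapped estimate for $T_t$, which is the same direct kernel computation for $T_t^*$ that you carry out (separation $|x-y|\approx 2^jr_Q$, H\"older in $y$, the uniform weak-$L^{n/\sigma}$ bound for $|x|^{-\sigma}$, and the split $t^{1/\alpha}\lessgtr 2^jr_Q$ using $q'>q_0'$). You also supply the proof of \eqref{eq2-lem2.3}, which the paper omits as ``similar''.

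One caveat you share with the paper: the weights are anchored at the origin, so centring $Q$ at $0$ is not literally without loss of generality, and the lemma is later applied to arbitrary Calder\'on--Zygmund cubes. If for $j\ge 2$ you replace the pointwise bound on $(1+t^{1/\alpha}/|y|)^{\theta}$ by H\"older against its $L^{q}(2^jQ)$ norm, as you already do for \eqref{eq2-lem2.3}, the same estimate follows for cubes in any position.
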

		
		\begin{proof}
			We need only to give the proof for \eqref{eq1-lem2.3} since the proof of \eqref{eq2-lem2.3} is similar.
			
			To prove \eqref{eq1-lem2.3} , by duality it suffices to prove  for all $f\in L^p(S_j(Q))$ supported in \(S_{j} (Q)\) with $j\ge 2$, we have
			\begin{equation*}%\label{eq2-aim4}
				\begin{aligned}
					\frac{1}{|Q|^{1/q_0}} \|T_tf\|_{L^{q_0,\infty }(Q)}
					& \lesssim \max\Big\{\Big(\f{2^jr_Q}{t^{1/\alpha}}\Big)^n,\Big(\f{2^jr_Q}{t^{1/\alpha}}\Big)^{n/p} \Big\} \Big(1+\f{t^{1/\alpha}}{r_Q}\Big)^{n/q_0}\\
					&\qquad \cdot\Big(1+\f{2^jr_Q}{t^{1/\alpha}}\Big)^{-n-\epsilon} \Big(\fint_{S_j(Q)}|f|^p\Big)^{1/p}.
				\end{aligned}
			\end{equation*}

			Since \(f\) is supported in \(S_j(Q)\), we have
			\begin{equation*}
				\begin{aligned}
					|T_t(f)(x)|&=\Big|\int_{\mathbb R^n} T_{t}(x,y)f(y)dy\Big|\\
					&\lesssim t^{-n/\alpha} \Big(1+\frac{t^{1/\alpha}}{|x|}\Big)^{\theta}
					\int_{S_{j}(Q)}  \Big(\frac{t^{1/\alpha}+|x-y|}{t^{1/\alpha}}\Big)^{-n-\epsilon} \Big(1+\frac{t^{1/\alpha}}{|y|}\Big)^{\sigma} |f(y)| dy.
				\end{aligned}
			\end{equation*}
			Without loss of generality, we assume that the cube \(Q\) is centred at origin with side length \(r_Q\). For any \(x\in Q\), it is obvious from \(k\geq 2\) that \(|x-y|\approx 2^jr_Q\). We further have
			\begin{align*}\label{eq: Lemma_T_t}
				|T_t(f)(x)|
				&\lesssim  \Big(1+\frac{2^jr_Q}{t^{1/\alpha}}\Big)^{-n-\epsilon}
				t^{-n/\alpha}\Big(1+\frac{t^{1/\alpha}}{|x|} \Big)^{\theta}
				\int_{S_j(Q)}  \Big(1+\frac{t^{1/\alpha}}{|y|}\Big)^{\sigma}
				|f(y)|dy.
			\end{align*}
			Denote
			\[\text{Term}\mathrm{II}:=t^{-n/\alpha}\Big(1+\frac{t^{1/\alpha}}{|x|} \Big)^{\theta}
			\int_{S_j(Q)}  \Big(1+\frac{t^{1/\alpha}}{|y|}\Big)^{\sigma}
			|f(y)|dy.\]
			Then,
			\begin{align}\label{eq: Lemma_T_t}
				\frac{1}{|Q|^{1/q_0}} \|T_tf\|_{L^{q_0,\infty }(Q)} \lesssim
				\frac{1}{|Q|^{1/q_0}}       \Big(1+\frac{2^jr_Q}{t^{1/\alpha}}\Big)^{-n-\epsilon}   \|\text{Term}\mathrm{II}\|_{L^{q_0,\infty }(Q)}.
			\end{align}
			We will estimate \(\|\text{Term}\mathrm{II}\|_{L^{q_0,\infty }(Q)}\)          in two cases:  \(t^{1/\alpha}\leq 2^jr_Q\) and  \(t^{1/\alpha}> 2^jr_Q\).
			
			\textbf{Case 1:} \(t^{1/\alpha}\leq 2^jr_Q\). For any \(x\in Q\) and \(y \in S_j(Q)\), we have \(|x|<r_Q\) and \(2^{j-1} r_Q\leq |y| < 2^j r_Q\). Then, we have
			\begin{align*}
				\text{Term}\mathrm{II}
				&= t^{-n/\alpha} (|x|+t^{1/\alpha})^{\theta}
				\frac{1}{|x|^{\theta} }
				\int_{S_j(Q)} (|y|+t^{1/\alpha})^{\sigma}
				\frac{1}{|y|^{\sigma} }
				|f(y)|dy\\
				&\leq
				t^{-n/\alpha} (r_Q+t^{1/\alpha})^{\theta}
				\frac{1}{|x|^{\theta} }
				\int_{S_j(Q)} (2^{j+1}r_Q)^{\sigma}
				\frac{1}{(2^{j-1}r_Q)^{\sigma} }
				|f(y)|dy\\
				&\lesssim
				t^{-n/\alpha}    (r_Q+t^{1/\alpha})^{\theta}
				\frac{1}{|x|^{\theta} }
				\int_{S_j(Q)}
				|f(y)|dy.
			\end{align*}
			Using H\"{o}lder's inequality for \(p\in (p_0,q]\),  we have
			\begin{equation}\label{intfonSjQ}
				\begin{aligned}
					\int_{S_j(Q)} |f(y)|dy
					&\leq \|f\|_{L^{p}({S_{j}(Q))}}
					|S_{j}(Q)|^{1/p'}\\
					%	&=  \|f\|_{L^{p}({S_{j}(Q))}}
					%	(2^jr_Q) ^{-\sigma p'}
					%	|S_{j}(Q)|^{1-\frac{1}{p}}\\
					&\lesssim  \|f\|_{L^{p}({S_{j}(Q))}}  (2^jr_Q) ^{n}  |S_j(Q)|^{-1/p}.
				\end{aligned}
			\end{equation}
			Then we get
			\begin{align*}
				\text{Term}\mathrm{II}
				\lesssim
				t^{-n/\alpha}
				(2^jr_Q) ^{n}(r_Q+t^{1/\alpha})^{\theta}
				\frac{1}{|x|^{\theta} }
				\|f\|_{L^{p}({S_{j}(Q))}}    |S_j(Q)|^{-1/p}.
			\end{align*}
			Taking the \(\|\cdot\|_{L^{q_0,\infty}(Q)}\) norm of both sides, we have
			\begin{align*}
				\| \text{Term}\mathrm{II}\|_{L^{q_0,\infty}(Q)} &\lesssim t^{-n/\alpha} (2^jr_Q)^n (r_Q+t^{1/\alpha})^\theta  |S_j(Q)| ^{-1/p}\|f\|_{L^{p}(S_j(Q))}   \bigg\|\frac{1}{|\cdot|^\theta}
				\bigg\|_{L^{q_0,\infty}(Q)}\\
				&\lesssim  t^{-n/\alpha} (2^jr_Q)^n (r_Q+t^{1/\alpha})^\theta  |S_j(Q)| ^{-1/p}\|f\|_{L^{p}(S_j(Q))}.
			\end{align*}
			That is,
			\begin{align*}
				\| \text{Term}\mathrm{II}\|_{L^{q_0,\infty}(Q)}  &\lesssim
				t^{-n/\alpha} (2^jr_Q)^n \Big(1+\frac{t^{1/\alpha}}{r_Q}\Big)^{n/q_0}  r_{Q}^{n/q_0} |S_j(Q)| ^{-1/p}\|f\|_{L^{p}(S_j(Q))}\\
				&= \Big(\frac{2^jr_Q}{t^{1/\alpha}}\Big)^{n} \Big(1+\frac{t^{1/\alpha}}{r_Q}\Big)^{n/q_0}
				|Q|^{1/q_0} |S_j(Q)|^{-1/p} \|f\|_{L^p(S_j(Q))}.
			\end{align*}
			
			\textbf{Case 2:} \(t^{1/\alpha}>2^jr_Q.\) For any \(y\in S_j(Q)\), we have \(2^{j-1}r_Q<|y|\leq 2^{j}r_Q<t^{1/\alpha}\), which implies
			\begin{equation}\label{eq: |y| and t}
				1+\frac{t^{1/\alpha}}{|y|}\lesssim \frac{t^{1/\alpha}}{|y|}\lesssim \frac{t^{1/\alpha}}{2^{j}r_Q}.
			\end{equation}
			For \(x\in Q\), by applying
			\eqref{intfonSjQ} and
			\eqref{eq: |y| and t}, we have
			\begin{align*}
				\text{Term}\mathrm{II}
				&\lesssim
				\Big(1+\frac{t^{1/\alpha}}{|x|}\Big)^{\theta}
				\Big(\frac{2^jr_Q}{t^{1/\alpha}}\Big)^{n-\sigma}
				\|f\|_{L^p(S_j(Q))}  |S_j(Q)|^{-1/p}\\
				&= (|x|+t^{1/\alpha})^{\theta} \frac{1}{|x|^{\theta}} \Big(\frac{2^jr_Q}{t^{1/\alpha}}\Big)^{n/p_0}  \|f\|_{L^p(S_j(Q))}  |S_j(Q)|^{-1/p}\\
				&\leq
				\Big(1+\frac{t^{1/\alpha}}{r_Q}\Big)^{n/q_0}
				\Big(\frac{r_{Q}}{|x|}\Big)^{{n/q_0}} \Big(\frac{2^jr_Q}{t^{1/\alpha}}\Big)^{n/p_0}  \|f\|_{L^p(S_j(Q))}  |S_j(Q)|^{-1/p},
			\end{align*}
			where the equality is from \(p_0=\frac{n}{n-\sigma}\), and the last inequality follows from \(|x|<r_Q\) and \(\theta=\frac{n}{q_0}\).
			
			Taking \(\|\cdot\|_{L^{q_0,\infty}(Q)}\) norm on both sides, by \(p_0<p\) we get\begin{equation*}
				\begin{aligned}
					\| \text{Term}\mathrm{II}\|_{L^{q_0,\infty}(Q)} & \lesssim \Big(\frac{2^jr_Q}{t^{1/\alpha}}\Big)^{n/p_0} \Big(1+\frac{t^{1/\alpha}}{r_Q}\Big)^{n/q_0} |Q|^{1/q_0} |S_j(Q)|^{-1/p} \|f\|_{L^p(S_j(Q))}\\
					& \leq\Big(\frac{2^jr_Q}{t^{1/\alpha}}\Big)^{n/p} \Big(1+\frac{t^{1/\alpha}}{r_Q}\Big)^{n/q_0} |Q|^{1/q_0} |S_j(Q)|^{-1/p} \|f\|_{L^p(S_j(Q))}.
				\end{aligned}
			\end{equation*}
			Therefore, by \eqref{eq: Lemma_T_t}, we obtain
			\begin{align*}
				\frac{1}{|Q|^{1/q_0}} \|T_tf\|_{L^{q_0,\infty }(Q)}
				& \lesssim \max\Big\{\Big(\f{2^jr_Q}{t^{1/\alpha}}\Big)^n,\Big(\f{2^jr_Q}{t^{1/\alpha}}\Big)^{n/p} \Big\} \Big(1+\f{t^{1/\alpha}}{r_Q}\Big)^{n/q_0}\\
				&\qquad \cdot\Big(1+\f{2^jr_Q}{t^{1/\alpha}}\Big)^{-n-\epsilon} \Big(\fint_{S_j(Q)}|f|^p\Big)^{1/p}.
			\end{align*}
			This completes the proof of the lemma.
		\end{proof}

		\section{Endpoint estimates of square operators}\label{section: square operators}

		In this section, we aim to prove Theorem \ref{thm: square function}.  Note that, under the setting \(1\leq p_{0}:= \frac{n}{n-\sigma}  < 2 <\frac{n}{\theta}=:q_0\), the space \(L^{p_0,\infty}(\mathbb R^n)\) is normable (see, \cite[Exercise 1.1.12(c)]{Gr1}), i.e., there is a norm on the space equivalent to the quasi-norm \(\|\cdot\|_{L^{p_0,\infty}(\mathbb R^n)}\). Therefore, to prove that the square operator \(S_L\) is bounded from \(L^{p_0,1}(\mathbb R^n)\) to \(L^{p_0,\infty}(\mathbb R^n)\), it suffices to prove that \(S_{L}\) is of restricted weak type \( (p_{0}, p_0) \) (see, e.g., \cite[Theorem 3.13]{S4}); that is,  for all \( \lambda>0\), and for any measurable set \(E\) of finite measure,
		\begin{equation}\label{eq:aim1}
			|\{ x\in \mathbb R^{n}: |S_{L}\chi_{E}(x)|>\lambda \}| \lesssim \lambda^{-p_0}\|\chi_{E}\|_{L^{p_0}(\mathbb R^{n})}^{p_{0}}.
		\end{equation}

		Make a Calder\'on--Zygmund decomposition of \(\chi_{E}\) in \(L^{p_0}(\mathbb R^n)\) at height \(\lambda>0\).
		We get a sequence of maximal disjoint dyadic cubes \(\{Q_j\}_j\) satisfying that
		\begin{enumerate}[label=(cz-\roman*)]
			\item \(\chi_{E } =g+b=g+\sum_{j}b_j\);
			\item\label{it-2} \(|g(x)| \leq C\lambda\) for almost \(x\in \mathbb R^n\) and \(\|g\|_{L^{p_0}(\mathbb R^n)}\leq \|\chi_{E}\|_{L^{p_0}(\mathbb R^n)}\);
			\item
			the support of each \(b_{j}\) is contained in \(Q_{j}\),
			\[
			\int_{\mathbb R^n} b_j(x)dx=0
			\quad \text{and} \quad
			\int_{\mathbb R^n} |b_j(x)|^{p_0} dx\leq C\lambda^{p_0} |Q_{j}|;
			\]
			\item \(
			|Q_j|\leq \lambda^{-p_0} \int_{Q_j} \chi_{E}^{p_0} (x) dx\leq 2^n |Q_j|\);
			\item \(\sum_{j}|Q_{j}| \leq C\lambda^{-p_0} \int_{\mathbb R^n}|\chi_{E}(x)|^{p_0}dx \).
		\end{enumerate}

		From the Calder\'on--Zygmund decomposition of \(\chi_{E}\) we have
		\begin{align*}
			|\{ x\in \mathbb R^{n}: |S_{L}\chi_{E}(x)|>\lambda \}|  &\leq
			\Big|\Big\{x\in \mathbb R^n: |S_{L}g(x)>\frac{\lambda}{2} \Big\}\Big|+\Big|\Big\{x\in \mathbb R^n: |S_{L}b(x)|>\frac{\lambda}{2} \Big\}\Big|.
			\end{align*}

		Using Chebyshev's inequality and the \(L^2\)-boundedness of \(S_L\), we get
		\begin{equation*}
			\Big|\Big\{x\in \mathbb R^n: |S_{L} g(x)|>\frac{\lambda}{2} \Big\}\Big| \lesssim  \lambda^{-2}\|S_Lg\|_{L^2}^2\lesssim \lambda^{-2}\|g\|_{L^2}^2\lesssim \lambda^{-p_0}\|g\|_{L^{p_0}}^{p_0}
        \lesssim \lambda^{-p_0}\|\chi_{E}\|_{L^{p_0}(\mathbb R^{n})}^{p_{0}},
		\end{equation*}
        where the third inequality follows from \ref{it-2}.

		We now focus on establishing the following estimate for the bad part,
		\begin{align}\label{eq:S_Lb}
			\Big|\Big\{x\in \mathbb R^n: |S_{L}b(x)|>\frac{\lambda}{2}\Big \}\Big| \lesssim \lambda^{-p_0}\|\chi_{E}\|_{L^{p_0}(\mathbb R^{n})}^{p_{0}}.
		\end{align}

		For each \(j\),
		\(b_{j}\) is supported in \(Q_{j}\). Denote by \(r_{j}\) the side length of \(Q_{j}\). Denote
		\[
		h_{1} = \sum_{j} (I-(I-e^{-r_{j}^{\alpha}L})^m) b_j \quad
		\text{and}
		\quad
		h_{2} =\sum_{j} (I-e^{-r_{j}^{\alpha}L})^m b_j,
		\]
		then we have
		\begin{equation*}
			b=\sum_{j}b_{j}=h_1+h_2.
		\end{equation*}
		
		Therefore, we get
		\begin{align*}
			\Big|\Big\{x\in \mathbb R^n: |S_{L}b(x)|>\frac{\lambda}{2} \Big\}\Big|
			\leq 	\Big|\Big\{x\in \mathbb R^n: |S_{L}h_1(x)|>\frac{\lambda}{4} \Big\}\Big|+	\Big|\Big\{x\in \mathbb R^n: |S_{L}h_2(x)|>\frac{\lambda}{4} \Big\}\Big|.
		\end{align*}

		\textbf{Proof of Term Involving \(\boldsymbol{h_1}\).}
		For convenience, we estimate the term \(|\{x\in \mathbb R^n: |S_{L}h_1(x)|>\lambda \} | \). The following claim is crucial for our proof.
		\begin{equation}\label{eq:h1 result}
			\|h_1\|_{L^2(\mathbb R^n)} = \|\sum_{j} (I-(I-e^{-r_j^{\alpha}L})^m)b_j\|_{L^2} \leq C\lambda \|\sum_{j}\chi_{Q_{j}}\|_{L^2}.
		\end{equation}
		Once we establish \eqref{eq:h1 result}, by the Chebyshev's theorem, Calder\'on--Zygmund decomposition and the \(L^2\) boundedness of \(S_{L}\), we can obtain the desired result,
		\begin{align*}
			|\{ x:|S_{L}(h_1) (x)|>\lambda\}| &\leq \lambda^{-2} \int_{\mathbb R^n} |S_{L}(h_1)(x)|^2dx\\
			&\leq C\lambda^{-2} \int_{\mathbb R^n}|h_1(x)|^2dx\\
			&\lesssim \lambda^{-p_0} \|\chi_{E}\|_{L^{p_0}}^{p_0}.
		\end{align*}
		
		In what follows, we prove \eqref{eq:h1 result}. By duality it is well-known that
		\begin{align*}
			\Big  \|\sum_{j} (I-(I-e^{-r_j^{\alpha}L})^m)b_j \Big\|_{L^2} &= \sup_{\|\phi\|_{L^2}\leq1} \Big|\Big \langle \sum_{j} (I-(I-e^{-r_j^{\alpha}L})^m)b_j , \phi  \Big\rangle\Big|\\
			&\leq \sup_{\|\phi\|_{L^2}\leq1}  \sum_{j} |\langle (I-(I-e^{-r_j^{\alpha}L})^m)b_j , \phi \rangle|,
		\end{align*}
		To prove the claim \eqref{eq:h1 result}, it is sufficient to prove
		\begin{align*}
			\sup_{\|\phi\|_{L^2}\leq1}  \sum_{j} \left|\langle (I-(I-e^{-r_j^{\alpha}L})^m)b_j , \phi \rangle \right|\leq C \lambda
			\|\sum_{j}\chi_{Q_{j}}\|_{L^2}.
		\end{align*}

		For \(k \in \mathbb{Z}^{+}\), let \(S_k(Q_j):=2^kQ_j\backslash 2^{k-1}Q_j\)  and define \(S_0(Q_j):=Q_j\).
		For any \(\phi\in L^{2}(\mathbb R^n)\), we split
		\[\phi=\sum_{k \in \mathbb{Z}^{+}\cup\{0\}} g_k, \text{ ~~where~~ } g_k=\phi \chi_{S_{k}(Q_j)}.\]
		Recall that \(p_{0}=\frac{n}{n-\sigma}\) and \(p_0'=\frac{n}{\sigma}\).
		The adjoint operator of \(I-(I-e^{-r_j^{\alpha}L})^m\), denoted by \( (I-(I-e^{-r_j^{\alpha}L})^m)^* \),  satisfies the following estimate
		\begin{align*}
			|\langle(I-(I-e^{-r_j^{\alpha}L})^m)^*\phi,b_{j}\rangle|
			& =|\langle (I-(I-e^{-r_j^{\alpha}L})^m)^* (\sum_k g_k), b_j  \rangle| \\
			&\leq \sum_{k \in \mathbb{Z}^{+}\cup\{0\}} \int_{\mathbb R^n} \chi_{Q_j} (I-(I-e^{-r_{j}^{\alpha}L})^m)^{*} g_k(x) \overline{b_j(x)}dx\\
			&\leq \sum_{k \in \mathbb{Z}^{+}\cup\{0\}}  \|b_j\|_{L^{p_0,1}}
			\| (I-(I-e^{-r_j^{\alpha}L})^m)^*g_k \|_{L^{{p_0'},\infty}(Q_j)},
		\end{align*}
		which implies
		\begin{align}\label{eq: inner_product}
			|\langle \phi, (I-(I-e^{-r_j^{\alpha}L})^m)b_{j}\rangle|
			\leq \sum_{k \in \mathbb{Z}^{+}\cup\{0\}}  \|b_j\|_{L^{p_0,1}}
			\| (I-(I-e^{-r_j^{\alpha}L})^m)^*g_k \|_{L^{{p_0'},\infty}(Q_j)}.
		\end{align}

		For \(k\geq 2\), applying Lemma \ref{lem: adjoint_operator_analogue} to
		the operator \[T_{r_j^{\alpha} }= I-(I-e^{-r_j^{\alpha}L})^m \]
		and the function \(g_k\),
		we obtain, for \(\frac{n}{n-\theta}=q'_0<r<2\)
			\begin{align*}
				&\quad \frac{1}{|Q_j|^{1/p_0'}}
				\| (I-(I-e^{-r_j^{\alpha}L})^m )^* g_k  \|_{L^{p_0',\infty}(Q_j)}\\ &  \leq C \max\{2^{kn},2^{kn/r} \} 2^{n/p_0'}2^{n/r}(1+2^k)^{-n-\alpha} \inf_{x'\in 2^kQ_j}M_{r}(g_k)(x') \\
				&\leq C 2^{kn} 2^{n/p_0'}2^{n/r} 2^{-k(n+\alpha)}  \inf_{x'\in 2^kQ_j}M_{r}(g_k)(x')\\
				&=C_{n,p_0'}  2^{-k\alpha} \inf_{x'\in 2^kQ_j}M_{r}(g_k)(x'),
			\end{align*}
			which implies
			\begin{equation}\label{eq: adjoint operator estimate}
				\begin{aligned}
					\| (I-(I-e^{-r_j^{\alpha}L})^m )^* g_k  \|_{L^{p_0',\infty}(Q_j)} \leq  C_{n,p_0'} |Q_j|^{1/p_0'}   2^{-k\alpha} \inf_{x'\in 2^kQ_j}M_{r}(g_k)(x').
				\end{aligned}
			\end{equation}

			By \eqref{eq: adjoint operator estimate} and \(Q_j\subset2^kQ_j\) for \(k\geq 2\), we get
			\begin{align*}
				&\qquad \sum_{k\geq 2}  \|b_j\|_{L^{p_0,1}}
				\| (I-(I-e^{-r_j^{\alpha}L})^m)^*g_k \|_{L^{{p_0'},\infty}(Q_j)}\\
				&\leq C_{n,p_0'} \|b_j\|_{L^{p_0,1}}   \sum_{k\geq 2}  |Q_j|^{1/p_0'}  2^{-k\alpha} \inf_{x'\in 2^kQ_j}M_{r}(g_k)(x')\\
				&\leq C_{n,p_0'}  \|b_j\|_{L^{p_0,1}}   \sum_{k\geq2}  |Q_j|^{1/p_0'}  2^{-k\alpha} \inf_{x'\in 2^kQ_j}M_{r}(\phi)(x')\\
				&\leq C_{n,p_0'} \lambda |Q_j| \sum_{k\geq 2}    2^{-k\alpha}
				\inf_{x'\in Q_j}M_{r}(\phi)(x'),
			\end{align*}	
			where \(\|b_j\|_{L^{p_0,1}(\mathbb R^n)}=\|b_j\|_{L^{p_0}(\mathbb R^n)} \leq C_n \lambda|Q_j|^{1/p_0}\) is from (cz-iii). We further have
			\begin{align*}
				\sum_{k\geq 2}  \|b_j\|_{L^{p_0,1}}
				\| (I-(I-e^{-r_j^{\alpha}L})^m)^*g_k \|_{L^{{p_0'},\infty}(Q_j)}
				&\leq C_{n,p_0'} \lambda \sum_{k\geq2}    2^{-k\alpha}  \int_{Q_j}\inf_{x'\in Q_j}M_{r}(\phi)(x')dx\\
				&\leq C_{n,p_0'}  \lambda \sum_{k\geq 2}    2^{-k\alpha}   \int_{Q_j}M_{r}(\phi)(x)dx\\
				&\lesssim
				\lambda \int_{Q_j}M_{r}(\phi)(x)dx.
			\end{align*}
			
			For \(k=1\), by Lemma \ref{lem: adjoint_operator_analogue}, we have
			\begin{align*}
				\frac{1}{|Q_j|^{1/p_0'}}
				\| (I-(I-e^{-r_j^{\alpha}L})^m )^* g_1  \|_{L^{p_0',\infty}(Q_j)}  \lesssim
				\inf_{x'\in 2Q_j}M_{r}(\phi)(x').
			\end{align*}
			It follows from \(Q_j \subset 2Q_j\) that
			\begin{align*}
				%|\langle \phi, (I-(I-e^{-r_j^{\alpha}L})^m)b_{j}\rangle|
				\|b_j\|_{L^{p_0,1}}
				\| (I-(I-e^{-r_j^{\alpha}L})^m)^*g_1 \|_{L^{{p_0'},\infty}(Q_j)}
				&\lesssim
				\lambda \int_{Q_j}\inf_{x'\in Q_j}M_{r}(\phi)(x')dx\\
				&\lesssim
				\lambda \int_{Q_j}M_{r}(\phi)(x)dx.
			\end{align*}
			
			For \(k=0\), by Lemma \ref{lem: adjoint_operator_analogue} again, we obtain
			\begin{align*}
				\frac{1}{|Q_j|^{1/p_0'}}
				\| (I-(I-e^{-r_j^{\alpha}L})^m )^* g_k  \|_{L^{p_0',\infty}(Q_j)}  & \lesssim
				\inf_{x'\in Q_j}M_{r}(\phi)(x').
			\end{align*}
			Then,  we have
			\begin{align*}
				\|b_j\|_{L^{p_0,1}}
				\| (I-(I-e^{-r_j^{\alpha}L})^m)^*g_0 \|_{L^{{p_0'},\infty}(Q_j)}
				&\lesssim  \lambda
				\int_{Q_j} \inf_{x'\in Q_j}M_{r}(\phi)(x')dx\\
				&\lesssim \lambda \int_{Q_j}M_{r}(\phi)(x)dx.
			\end{align*}
			
			Thus, from \eqref{eq: inner_product}, we have
			\begin{align*}
				\sup_{\|\phi\|_{L^{2}}\leq1} \sum_{j} |\langle (I-(I-e^{-r_j^{\alpha}L})^m)b_j, \phi \rangle|
				&\lesssim  \sup_{\| \phi\|_{L^2}\leq 1} \lambda\sum_{j} \int_{Q_j} M_{r}(\phi) (x) dx\\
				&=  \sup_{\| \phi\|_{L^2}\leq 1} \lambda \int_{\mathbb R^n} M_{r}(\phi) (x) \sum_{j} \chi_{Q_j}(x) dx.
			\end{align*}
			Since \(M_{r}\) is bounded on \(L^2(\mathbb R^n)\) for \(r<2\), using the H\"older's inequality we obtain
			\begin{align*}
				\sup_{\| \phi\|_{L^2}\leq 1} \lambda \int_{\mathbb R^n} M_{r}(\phi) (x) \sum_{j} \chi_{Q_j}(x) dx
				&\leq  \sup_{\| \phi\|_{L^2}\leq 1}  \lambda \|M_{r}(\phi)\|_{L^2(\mathbb R^n)} \big\|\sum_{j} \chi_{Q_j} \big\|_{L^2(\mathbb R^n)}\\
				&\lesssim  \sup_{\| \phi\|_{L^2}\leq 1}  \lambda \|\phi\|_{L^2(\mathbb R^n)} \big \|\sum_{j} \chi_{Q_j} \big\|_{L^2(\mathbb R^n)}\\
				&\lesssim \lambda  \big \|\sum_{j} \chi_{Q_j} \big\|_{L^2(\mathbb R^n)}.
			\end{align*}

			Therefore, the claim \eqref{eq:h1 result} is proved.
			
			\textbf{Proof of Term Involving \(\boldsymbol{h_2}\).}
			Now we turn to prove \[
			|\{ x:|S_{L}(h_2) (x)|>\lambda\}| \lesssim \lambda^{-p_0} \|\chi_E\|_{L^{p_0}}^{p_0} .\] Recall that
			\(
			h_{2} =\sum_{j} (I-e^{-r_{j}^{\alpha}L})^m b_j
			\).
By Chebyshev's inequality we get
			\begin{equation} \label{eq: S_L h_2 estimate}
				\begin{aligned}
					&\qquad \Big|
					\Big\{x\in \mathbb R^n: \big|
					S_L\big(\sum_{j}(I-e^{-r_j^\alpha L})^m b_j\big)(x)
					\big| >\lambda
					\Big\}
					\Big| \\
					&  \lesssim \sum_{j} |2Q_j| + \lambda^{-2} \Big\| S_L\big(
					\sum_{j}(I-e^{-r_j^\alpha L})^m b_j \big)
					\Big\|_{L^2(\mathbb R^n\setminus 2Q_j)}^2\\
					&  \lesssim \lambda^{-p_0} \|\chi_E\|_{L^{p_0}(\mathbb{R}^n)}^{p_0} + \lambda^{-2} \int_{\mathbb R^n\setminus 2Q_j}
					\Big| S_L\big(
					\sum_{j}(I-e^{-r_j^\alpha L})^m b_j \big)(x)
					\Big|^2 dx.
				\end{aligned}
			\end{equation}

			We first claim that
			\begin{align}\label{eq: sum of S_L}
				\|S_{L}(\sum_{j}(I-e^{-r_{j}^{\alpha} L} )^{m} b_j) \|_{L^2(\mathbb {R}^{n}\setminus 2Q_{j})} \leq \sum_{j} 	\|S_{L}((I-e^{-r_{j}^{\alpha} L} )^{m} b_j) \|_{L^2(\mathbb {R}^{n}\setminus 2Q_{j})}.
			\end{align}
			Indeed, let \(\textbf{u}=(u_1,u_2,\cdots,u_m)\) with \(0<u_1,u_2,\cdots,u_m<r_{j}^{\alpha}\) and  \(|\textbf{u}|=u_1+u_2+\cdots+u_m\), then we have the estimate
			\begin{equation}\label{eq: Proof of sum of S_L 1}
				\begin{aligned}
					&\qquad \Big\|S_{L}\Big(\sum_{j}(I-e^{-r_{j}^{\alpha} L} )^{m} b_j\Big) \Big\|_{L^2(\mathbb {R}^{n}\setminus 2Q_{j})} \\
					&  \leq \Big[
					\int_{\mathbb {R}^{n}\setminus 2Q_{j}} \int_{0}^{\infty} s \Big| \sum_{j}
					\int_{[0,r_j^{\alpha}]^m} L^{m+1} e^{-(|\textbf{u}|+s)L} b_{j}(x) d\textbf{u}
					\Big|^2	ds	dx
					\Big]^{1/2}\\
					&= \Big(\int_{\mathbb R^n\setminus 2Q_j} \|\sum_{j}F_j(\cdot,x) \|_{L^{2}(0,+\infty)}^2 dx\Big)^{1/2},
				\end{aligned}
			\end{equation}
			where we denote
			\begin{align*}
				F_j(s, x):= \sqrt{s}
				\int_{[0,r_j^{\alpha}]^m} L^{m+1} e^{-(|\textbf{u}|+s)L} b_{j}(x) d\textbf{u}.
			\end{align*}
			Using the Minkowski's inequality twice we have
			\begin{equation}\label{eq: Proof of sum of S_L 2}
				\begin{aligned}
					&\qquad\Big(\int_{\mathbb R^n\setminus 2Q_j} \|\sum_{j}F_j(\cdot,x) \|_{L^{2}(0,+\infty)}^2 dx\Big)^{1/2}\\
					&\leq \Big(
					\int_{\mathbb {R}^{n}\setminus 2Q_{j}} \Big( \sum_{j}
					\left\|
					F_j(\cdot, x)
					\right\|_{L^{2}((0,+\infty))} \Big)^2
					dx
					\Big)^{1/2}\\
					&\leq \sum_{j}  \Big(
					\int_{\mathbb {R}^{n}\setminus 2Q_{j}}
					\left\|
					F_j(\cdot,x)
					\right\|_{L^{2}((0,+\infty))}^2
					dx
					\Big)^{1/2}.
				\end{aligned}
			\end{equation}
			Since
			\begin{equation}\label{eq: Proof of sum of S_L 3}
				\begin{aligned}
					\|
					F_j(\cdot,x)
					\|_{L^{2}((0,+\infty))}^2 &= \int_{0}^{\infty} s \Big|
					\int_{[0,r_j^{\alpha}]^m} L^{m+1} e^{-(|\textbf{u}|+s)L} b_{j}(x) d\textbf{u}
					\Big|^2	ds\\
					&=\Big|
					S_{L}\Big((I-e^{-r_{j}^{\alpha} L} )^{m} b_j\Big)
					\Big|^2,
				\end{aligned}
			\end{equation}
			by combining \eqref{eq: Proof of sum of S_L 1}, \eqref{eq: Proof of sum of S_L 2} and \eqref{eq: Proof of sum of S_L 3}, we obtain \eqref{eq: sum of S_L}.

			Next we estimate \( \|S_{L}((I-e^{-r_{j}^{\alpha} L} )^{m} b_j) \|_{L^2(\mathbb {R}^{n}\setminus 2Q_{j})} \).
			We decompose \( \mathbb{R}^n \setminus 2Q_j \) into a union of annuli, i.e.,
			\begin{align*}
				\mathbb{R}^n \setminus 2Q_j = \bigcup_{k=2}^{+\infty} (2^{k}Q_{j}\setminus 2^{k-1}  Q_j)=: \bigcup_{k=2}^{+\infty}  S_{k}(Q_{j}).
			\end{align*}
			Denote \(G_{j}(s,x)\) by \(F_j(s,x)/\sqrt{s}\), i.e.,
			\begin{align*}
				G_j(s,x)= 	\int_{[0,r_j^{\alpha}]^m} L^{m+1} e^{-(|\textbf{u}|+s)L} b_{j}(x) d\textbf{u}.
			\end{align*}	We write
			\begin{align*}
				\quad	\|S_{L}((I-e^{-r_{j}^{\alpha} L} )^{m} b_j) \|_{L^2(\mathbb {R}^{n}\setminus 2Q_{j})}
				&=\Big(
				\int_{\mathbb {R}^{n}\setminus 2Q_{j}} \int_{0}^{\infty} |F_j(s,x)|^2	ds	dx
				\Big)^{1/2}\\
				&=\Big(
				\int_{0}^{\infty} s \int_{\mathbb {R}^{n}\setminus 2Q_{j}}   \left|
				G_j(s,x)
				\right|^2
				dx
				ds
				\Big)^{1/2}\\
				&= \Big( \int_{0}^{\infty} s \sum_{k}^{+\infty} \int_{2^{k+1}Q_{j}\setminus 2^k  Q_j} \left|  G_j(s,x) \right|^2  dx
				ds\Big)^{1/2}\\
				&=\Big( \int_{0}^{\infty} s \sum_{k}^{+\infty} \left\|  G_j(s,\cdot) \right\|_{L^2(S_{k}(Q_j)) }^2ds\Big)^{1/2}.
			\end{align*}

			The Minkowski's inequality gives
			\begin{align*}
				\quad   \|  G_j(s,\cdot) \|_{L^2(S_{k}(Q_j)) }^2
				&=
				\Big\|  	\int_{[0,r_j^{\alpha}]^m} L^{m+1} e^{-(|\textbf{u}|+s)L} b_{j} d\textbf{u} \Big\|_{L^2(S_{k}(Q_j)) }^2 \\
				&\leq
				\Big(
				\int_{[0,r_j^{\alpha}]^m}
				\left\|
				L^{m+1} e^{-(|\textbf{u}|+s)L} b_{j}
				\right\|_{L^2(S_{k}(Q_j)) }  d\textbf{u}
				\Big)^2\\
				&=: (G_{j,k}(s))^2.
			\end{align*}
			
			By applying Lemma \ref{lem: T_t: L^{p_0,1} to L^q} and the fact that
			\[\max\Big\{\big(\frac{a}{b}\big)^{n/p_0},\big(\frac{a}{b}\big)^{n}\Big\}\leq \max\Big\{\big(\frac{a}{b}\big)^{n/2},\big(\frac{a}{b}\big)^{n}\Big\}\]
			with $1<p_0<2$ and $a,b>0$, we have
			\begin{equation}\label{eq: L^2(S_k(Q_j)) norm}
				\begin{aligned}
					&\qquad	\|
					L^{m+1} e^{-(|\textbf{u}|+s)L} b_{j}
					\|_{L^2(S_{k}(Q_j)) } \\
					& \lesssim
					(|\textbf{u}|+s)^{-(m+1)}
					\max\Big\{\Big(\f{r_j}{(|\textbf{u}|+s)^{1/\alpha}}\Big)^{n/2}, \Big(\f{r_j}{(|\textbf{u}|+s)^{1/\alpha}}\Big)^{n}\Big\}\\
					&\quad \cdot\Big(1+\f{(|\textbf{u}|+s)^{1/\alpha}}{2^kr_j}\Big)^{n/2} \Big(1+\f{2^kr_j}{(|\textbf{u}|+s)^{1/\alpha}}\Big)^{-n-\epsilon}
					|S_{k}(Q_{j})|^{1/2} |Q_{j}|^{-1/p_0} \|b_j\|_{L^{p_{0}}}\\
					&\leq(|\textbf{u}|+s)^{-(m+1)}
					\max\Big\{\Big(\f{r_j}{(|\textbf{u}|+s)^{1/\alpha}}\Big)^{n/2}, \Big(\f{r_j}{(|\textbf{u}|+s)^{1/\alpha}}\Big)^{n}\Big\}\\
					&\quad  \cdot\Big(1+\f{(|\textbf{u}|+s)^{1/\alpha}}{2^kr_j}\Big)^{n/2} \Big(1+\f{2^kr_j}{(|\textbf{u}|+s)^{1/\alpha}}\Big)^{-n-\epsilon}
					2^{kn/2}|Q_{j}|^{1/2-1/p_0} \|b_j\|_{L^{p_{0},1}},
				\end{aligned}
			\end{equation}
			where the last equality follows from \(b_j=\chi_{E}\chi_{Q_{j}} \) and \(|S_k(Q_j)|\leq 2^{kn}|Q_j|\).
			
			Substituting \eqref{eq: L^2(S_k(Q_j)) norm} into \(G_{j,k}(s)\), we have
			\begin{align*}
				G_{j,k}(s) & \lesssim  2^{kn/2}|Q_{j}|^{\frac{1}{2}-\frac{1}{p_0}} \|b_j\|_{L^{p_{0},1}}
				\int_{[0,r_j^{\alpha}]^m}
				(|\textbf{u}|+s)^{-(m+1)}
				\max\Big\{\Big(\f{r_j}{(|\textbf{u}|+s)^{1/\alpha}}\Big)^{n/2},\\
				&\quad  \Big(\f{r_j}{(|\textbf{u}|+s)^{1/\alpha}}\Big)^{n}\Big\}
				\Big(1+\f{(|\textbf{u}|+s)^{1/\alpha}}{2^kr_j}\Big)^{n/2} \Big(1+\f{2^kr_j}{(|\textbf{u}|+s)^{1/\alpha}}\Big)^{-n-\epsilon}
				d\textbf{u}
			\end{align*}

			Therefore, we get
			\begin{align*}
				& \qquad
				\Big[ \int_{0}^{+\infty}
				s
				\sum_{k=2}^{+\infty}
				(G_{j,k}(s))^2
				ds
				\Big]^{1/2}
				\leq \Big[   \sum_{k=2}^{+\infty} \int_{0}^{+\infty}
				s
				(G_{j,k}(s))^2
				ds
				\Big]^{1/2}\\
				&\leq |Q_{j}|^{\frac{1}{2}-\frac{1}{p_0}} \|b_j\|_{L^{p_{0},1}} \Big [  \sum_{k=2}^{+\infty} \int_{0}^{+\infty}
				s 2^{kn}
				\Big [
				\int_{[0,r_j^{\alpha}]^m}
				(|\textbf{u}|+s)^{-(m+1)}
				\max\Big\{\Big(\f{r_j}{(|\textbf{u}|+s)^{1/\alpha}}\Big)^{n/2},\\
				&\quad  \Big(\f{r_j}{(|\textbf{u}|+s)^{1/\alpha}}\Big)^{n}\Big\}
				\Big(1+\f{(|\textbf{u}|+s)^{1/\alpha}}{2^kr_j}\Big)^{n/2} \Big(1+\f{2^kr_j}{(|\textbf{u}|+s)^{1/\alpha}}\Big)^{-n-\epsilon}d\textbf{u}
				\Big ]^2
				ds
				\Big ]^{1/2}.
			\end{align*}
			
			By decomposing the integral \(\int_{0}^{+\infty}\) as \(\int_{0}^{+\infty}=\int_{0}^{r_j^{\alpha}} + \int_{r_j^{\alpha}}^{(2^kr_{j})^{\alpha}} +\int_{(2^kr_j)^{\alpha}}^{+\infty}\), we obtain		
			\begin{align*}
				\Big[ \int_{0}^{+\infty}
				s
				\sum_{k=2}^{+\infty}
				(G_{j,k}(s))^2
				ds
				\Big]^{1/2}
				\leq |Q_j|^{\frac{1}{2}-\frac{1}{p_0}}  \|b_j\|_{L^{p_0,1}}  (\text{Term}\mathrm{III_1}+\text{Term}\mathrm{III_2}+\text{Term}\mathrm{III_3})^{1/2},
			\end{align*}
			where
			\begin{align*}
				\text{Term}\mathrm{III_1} &:=    \sum_{k=2}^{+\infty} 2^{kn}  \int_{0}^{r_j^{\alpha}}
				s
				\Big [
				\int_{[0, r_j^{\alpha}]^m}
				(|\textbf{u}|+s)^{-(m+1)}
				\max\Big\{\Big(\f{r_j}{(|\textbf{u}|+s)^{1/\alpha}}\Big)^{n/2},\\
				&\quad  \Big(\f{r_j}{(|\textbf{u}|+s)^{1/\alpha}}\Big)^{n}\Big\}
				\Big(1+\f{(|\textbf{u}|+s)^{1/\alpha}}{2^kr_j}\Big)^{n/2} \Big(1+\f{2^kr_j}{(|\textbf{u}|+s)^{1/\alpha}}\Big)^{-n-\epsilon}
				d\textbf{u}
				\Big ]^2
				ds,
			\end{align*}
			\begin{align*}
				\text{Term}\mathrm{III_2} &:=  \sum_{k=2}^{+\infty} 2^{kn} \int_{r_j^{\alpha}}^{(2^kr_{j})^{\alpha}}
				s
				\Big [
				\int_{[0, r_j^{\alpha}]^m}
				(\|\textbf{u}\|+s)^{-(m+1)}
				\max\Big\{\Big(\f{r_j}{(|\textbf{u}|+s)^{1/\alpha}}\Big)^{n/2},\\
				&\quad  \Big(\f{r_j}{(|\textbf{u}|+s)^{1/\alpha}}\Big)^{n}\Big\}
				\Big(1+\f{(|\textbf{u}|+s)^{1/\alpha}}{2^kr_j}\Big)^{n/2} \Big(1+\f{2^kr_j}{(|\textbf{u}|+s)^{1/\alpha}}\Big)^{-n-\epsilon}
				d\textbf{u}
				\Big ]^2
				ds,
			\end{align*}
			and
			\begin{align*}
				\text{Term}\mathrm{III_3} &:=  \sum_{k=2}^{+\infty} 2^{kn}  \int_{(2^kr_{j})^{\alpha}}^{+\infty}
				s
				\Big [
				\int_{[0, r_j^{\alpha}]^m}
				(|\textbf{u}|+s)^{-(m+1)}
				\max\Big\{\Big(\f{r_j}{(|\textbf{u}|+s)^{1/\alpha}}\Big)^{n/2},\\
				&\quad  \Big(\f{r_j}{(|\textbf{u}|+s)^{1/\alpha}}\Big)^{n}\Big\}
				\Big(1+\f{(|\textbf{u}|+s)^{1/\alpha}}{2^kr_j}\Big)^{n/2} \Big(1+\f{2^kr_j}{(|\textbf{u}|+s)^{1/\alpha}}\Big)^{-n-\epsilon} d\textbf{u}
				\Big ]^2
				ds.
			\end{align*}

			Next, we prove that the above three terms are convergent.
			
			\textbf{Estimate of \(\text{Term}\mathrm{III_1}\):}
			Since \((|\textbf{u}|+s)^{1/\alpha}\lesssim r_j\), we have
			\begin{align*}
				\text{Term}\mathrm{III_1} &\lesssim   \sum_{k=2}^{+\infty} 2^{kn}  \int_{0}^{r_j^{\alpha}}
				s
				\Big [
				\int_{[0, r_j^{\alpha}]^m}
				(|\textbf{u}|+s)^{-(m+1)-n/\alpha} r_{j}^{n}\\
				&\qquad\qquad\qquad\qquad\quad\cdot\Big(1+\f{(|\textbf{u}|+s)^{1/\alpha}}{2^kr_j}\Big)^{n/2} \Big(1+\f{2^kr_j}{(|\textbf{u}|+s)^{1/\alpha}}\Big)^{-n-\epsilon}
				d\textbf{u}
				\Big ]^2
				ds\\
				&\lesssim  \sum_{k=2}^{+\infty} 2^{kn}  \int_{0}^{r_j^{\alpha}}
				s
				\Big [
				\int_{[0, r_j^{\alpha}]^m}
				(|\textbf{u}|+s)^{-(m+1)-n/\alpha} r_{j}^{n}
				\Big(1+\f{2^kr_j}{(|\textbf{u}|+s)^{1/\alpha}}\Big)^{-n-\epsilon}
				d\textbf{u}
				\Big ]^2
				ds\\
				&\lesssim  \sum_{k=2}^{\infty} 2^{-nk-2\epsilon k} r_j^{-2\epsilon} \int_{0}^{r_j^{\alpha}} s \Big[ \int_{[0, r_j^{\alpha}]^m} (|\textbf{u}|+s)^{-(m+1)+\epsilon/\alpha}  d\textbf{u} \Big]^2 ds.
			\end{align*}
			We write
			\begin{align*}
				\int_{0}^{r_j^{\alpha}} s \Big[ \int_{[0, r_j^{\alpha}]^m} (|\textbf{u}|+s)^{-(m+1)+\epsilon/\alpha}  d\textbf{u} \Big]^2 ds= \int_{0}^{r_j^{\alpha}} s \Big[ \int_{[0, r_j^{\alpha}]^m}
				\Big(	\frac{1}{(|\textbf{u}|+s)^{1-\frac{\epsilon}{\alpha (m+1)}}} \Big)^{m+1}
				d\textbf{u} \Big]^2 ds,
			\end{align*}
			then we obtain
			\begin{align*}
				\int_{0}^{r_j^{\alpha}} s \Big[ \int_{[0, r_j^{\alpha}]^m} (|\textbf{u}|+s)^{-(m+1)+\epsilon/\alpha}  d\textbf{u} \Big]^2 ds
				&\leq \int_{0}^{r_j^{\alpha}} s^{-1+\frac{2\epsilon}{\alpha (m+1)}}  \Big( \int_{0}^{ r_j^{\alpha}}
				\frac{1}{u^{1-\frac{\epsilon}{\alpha (m+1)}}}
				du \Big)^{2m} ds\\
				&=r_j^{(\frac{2m}{m+1}+\frac{2}{m+1})\epsilon}\\
				&=r_j^{2\epsilon}.
			\end{align*}
			Therefore, \(\text{Term}\mathrm{III_1} \lesssim 1\).

		\textbf{Estimate of \(\text{Term}\mathrm{III_2}\):}
		For \(s\in (r_j^{\alpha},(2^kr_j)^{\alpha})\), we have \(s\leq|\textbf{u}|+s\leq mr_{j}^{\alpha}+s\lesssim s\), that is \(|\textbf{u}|+s\approx s\).
		Since \[\frac{r_j}{(|\textbf{u}|+s)^{1/\alpha}} <1 \text{~ and ~}r_j<s^{1/\alpha}<2^kr_j,\]
		$\text{Term}\mathrm{III_2}$ can be controlled by
		\begin{align*}
						&\quad\sum_{k=2}^{+\infty} 2^{kn} \int_{r_j^{\alpha}}^{(2^kr_{j})^{\alpha}}
			s
			\Big [
			\int_{[0, r_j^{\alpha}]^m}
			(|\textbf{u}|+s)^{-(m+1)-n/(2\alpha)} r_j^{n/2}
			\Big(1+\f{(|\textbf{u}|+s)^{1/\alpha}}{2^kr_j}\Big)^{n/2}\\
			&\qquad\qquad\qquad\qquad\qquad \cdot \Big(1+\f{2^kr_j}{(|\textbf{u}|+s)^{1/\alpha}}\Big)^{-n-\epsilon}
			d\textbf{u}
			\Big ]^2
			ds\\
			&\approx  \sum_{k=2}^{+\infty} 2^{kn} \int_{r_j^{\alpha}}^{(2^kr_{j})^{\alpha}}
			s
			\Big [
			\int_{[0, r_j^{\alpha}]^m}
			s^{-(m+1)-n/(2\alpha)} r_j^{n/2}
			\Big(1+\f{s^{1/\alpha}}{2^kr_j}\Big)^{n/2} \Big(1+\f{2^kr_j}{s^{1/\alpha}}\Big)^{-n-\epsilon}
			d\textbf{u}
			\Big ]^2
			ds\\
			&\lesssim 	\sum_{k=2}^{+\infty} 2^{-kn-2k\epsilon}
			r_j^{-n-2\epsilon+2\alpha m} \int_{r_j^{\alpha}}^{(2^kr_{j})^{\alpha}}
			s^{-1-2m+n/\alpha+2\epsilon/\alpha}
			ds.
			\end{align*}
		By calculating,
		\begin{align*}
			r_j^{-n-2\epsilon+2\alpha m} \int_{r_j^{\alpha}}^{(2^kr_{j})^{\alpha}}
			s^{-1-2m+n/\alpha+2\epsilon/\alpha}
			ds\lesssim
			2^{k(-2m\alpha+n+2\epsilon)},
		\end{align*}
		which implies
		\begin{align*}
			\text{Term}\mathrm{III_2} \lesssim 1.
		\end{align*}

		\textbf{Estimate of \(\text{Term}\mathrm{III_3}\):}
		For \(s\in ((2^kr_j)^{\alpha},\infty)\), we still have \(s\leq|\textbf{u}|+s\leq mr_{j}^{\alpha}+s\lesssim s\), i.e., \(|\textbf{u}|+s\approx s\). Using the equivalence and the estimate \(1+\frac{s^{1/\alpha}}{2^{k}r_j}\leq \frac{2s^{1/\alpha}}{2^{k}r_j}\) we have
		\begin{align*}
			\text{Term}\mathrm{III_3} &\approx  \sum_{k=2}^{+\infty} 2^{kn}  \int_{(2^kr_{j})^{\alpha}}^{+\infty}
			s
			\Big [
			\int_{[0, r_j^{\alpha}]^m}
			s^{-m-1-n/(2\alpha)} r_j^{n/2}
			\Big(1+\f{s^{1/\alpha}}{2^kr_j}\Big)^{n/2} \Big(1+\f{2^kr_j}{s^{1/\alpha}}\Big)^{-n-\epsilon} d\textbf{u}
			\Big ]^2
			ds\\
			&\lesssim  \sum_{k=2}^{+\infty}  \int_{(2^kr_{j})^{\alpha}}^{+\infty}
			s
			\Big [
			\int_{[0, r_j^{\alpha}]^m}
			s^{-m-1}
			\Big(1+\f{2^kr_j}{s^{1/\alpha}}\Big)^{-n-\epsilon} d\textbf{u}
			\Big ]^2
			ds\\
			&\leq  \sum_{k=2}^{+\infty} r_j^{2m\alpha} \int_{(2^kr_{j})^{\alpha}}^{+\infty}
			s^{-1-2m}
			ds.
		\end{align*}
		Since
		\begin{align*}
			\int_{(2^kr_{j})^{\alpha}}^{+\infty}
			s^{-1-2m}
			ds\lesssim 2^{-2\alpha mk} r_j^{-2m\alpha},
		\end{align*}
		we obtain
		\begin{align*}
			\text{Term}\mathrm{III_3}\lesssim  \sum_{k=2}^{+\infty} 2^{-2\alpha mk}\lesssim 1.
		\end{align*}

		Hence, we obtain\begin{align*}\label{eq:term3 L2 estimate}
			\|S_{L}(I-e^{-r_{j}^{\alpha} L} )^{m} b_j  \|_{L^2(\mathbb {R}^{n}\setminus 2Q_{j})}  & \lesssim |Q_{j}|^{1/2-1/p_{0}} \|b_{j}\|_{L^{p_{0},1}}.
		\end{align*}
		
		Since
		\begin{align*}
			|Q_j|\leq \lambda^{-p_0} \int_{Q_j} \chi_{E}^{p_0} (x) dx\leq 2^n |Q_j|,
		\end{align*}
		and
		\begin{align*}
			| \bigcup_{j} Q_{j}  | =\sum_{j} |Q_j|  \lesssim \lambda^{-p_0}  \|\chi_{E}\|_{L^{p_0,1}}^{p_0},
		\end{align*}
		we have
		\begin{equation}\label{eq:term3 L2 estimate2}
			\begin{aligned}
				\frac{1}{\lambda^2} \sum_{j}
				\left\| S_{L} (I-e^{r_{j}^\alpha L})^m  b_{j}
				\right\|_{L^2(\mathbb{R}^n\setminus 2Q_{j})}^2
				& \leq  \frac{1}{\lambda^2} \sum_{j}  |Q_{j}|^{1-2/p_{0}} \|b_j\|_{L^{p_{0},1}}^2\\
				&\leq \frac{1}{\lambda^2} \sum_{j} |Q_j| \Big( \frac{1}{|Q_j|} \int_{Q_j} \chi_{E}^{p_0} (x) dx \Big)^{2/p_0}\\
				&\lesssim \lambda^{-p_0}\| \chi_{E}\|_{L^{p_0,1}}^{p_0},
			\end{aligned}
		\end{equation}
		where the second inequality follows from the setting that \( b_{j}=\chi_{E}\chi_{Q_{j}} \) and \( \|b_j\|_{L^{p_0,1}}=\|b_j\|_{L^{p_0}} \), and the third inequality is from that\[ \frac{1}{ |Q_j| } \int_{Q_j} \chi_{E}^{p_0} (x) dx\leq 2^n \lambda^{p_0} .\]
		By \eqref{eq: S_L h_2 estimate}, \eqref{eq: sum of S_L} and \eqref{eq:term3 L2 estimate2}, we have
		\begin{align*}
			&\qquad \Big|
			\big\{x\in \mathbb R^n: \big|
			S_L\big(\sum_{j}(I-e^{-r_j^\alpha L})^m b_j\big)(x)
			\big| >\lambda
			\big\}
			\Big| \\
			&  \lesssim \lambda^{-p_0} \|\chi_E\|_{L^{p_0}(\mathbb{R}^n)}^{p_0} + \lambda^{-2} \Big\| S_L\big(
			\sum_{j}(I-e^{-r_j^\alpha L})^m b_j \big)
			\Big\|_{L^2(\mathbb R^n\setminus 2Q_j)}^2\\
			&\lesssim  \lambda^{-p_0}\| \chi_{E}\|_{L^{p_0,1}}^{p_0}.
		\end{align*}
		Hence, the proof of \eqref{eq:S_Lb} is complete, and \eqref{eq:aim1} follows.

		\section{Endpoint Estimates of \texorpdfstring{$\mathcal{M}(L)$}{M(L)}}\label{sec proof of ML}
		Recall that the functional calculus of Laplace transform type \(\mathcal M(L)\) is defined by
		\begin{align*}
			\mathcal{M} (L) =\int_{0}^{\infty} m(t)Le^{-tL}dt,
		\end{align*}
		where \(m: [0,\infty)\rightarrow \mathbb C\) is a bounded function. To prove the boundedness of \(\mathcal M(L)\) from \(L^{p_0,1}\) to \(L^{p_0,\infty}\), it is still sufficient to prove that \(\mathcal M(L)\) is of restricted weak type \((p_0,p_0)\), i.e., for all \(\lambda>0\), and for any measurable set \(E\) of finite measure,
		\begin{align*}
			|\{x\in \mathbb R^n: |\mathcal M(L)\chi_{E}(x)|>\lambda\}| \lesssim \lambda^{-p_0} \|\chi_{E} \|_{L^{p_0}(\mathbb R^n)}^{p_0}.
		\end{align*}
		
		We still make the Calder\'on--Zygmund decomposition of \(\chi_{E}\) and use the same notation of \(h_1\) and \(h_2\) as in Section \ref{section: square operators}.  Therefore, \(\chi_{E}=g+h_1+h_2\) and we write
		\begin{align*}
			|\{x\in \mathbb R^n:|\mathcal M(L)\chi_{E}(x)|>\lambda\}| &\leq |\{x\in \mathbb R^n:|\mathcal M(L)g(x)|>\frac{\lambda}{3}\}|\\
			&\quad+  |\{x\in \mathbb R^n:|\mathcal M(L)h_1(x)|>\frac{\lambda}{3}\}|\\
			&\quad+ |\{x\in \mathbb R^n:|\mathcal M(L)h_2(x)|>\frac{\lambda}{3}\}|.
		\end{align*}
		
		The estimate
		\begin{align*}
			|\{x\in \mathbb R^n:|\mathcal M(L)g(x)|>\frac{\lambda}{3}\}| \lesssim \lambda^{-p_0} \|\chi_{E}\|_{L^{p_0}(\mathbb R^n)}^{p_0}
		\end{align*}
		is a direct result of Chebyshev's inequality, the \(L^2\) boundedness of \(\mathcal M(L)\) and the property that \(\|g\|_{L^{p_0}(\mathbb R^n)}\leq \|\chi_{E}\|_{L^{p_0}(\mathbb R^n)}\).
		
		Using the result \eqref{eq:h1 result}, the Chebyshev's inequality, Calder\'on--Zygmund decomposition and the \(L^2\) boundedness of \(\mathcal M(L)\), the estimate
		\begin{align*}
			|\{x\in \mathbb R^n:|\mathcal M(L)h_1(x)|>\frac{\lambda}{3}\}| \lesssim \lambda^{-p_0} \|\chi_{E}\|_{L^{p_0}(\mathbb R^n)}^{p_0}
		\end{align*}
		follows directly.

		In what follows, we devote to estimate the term \(|\{x\in \mathbb R^n:|\mathcal M(L)h_2(x)|>\frac{\lambda}{3}\}|\). By substituting \(h_2=\sum_{j}(I-e^{-r_j^{\alpha}L})^mb_j\) into the expression of \(\mathcal M(L)\) we have
		\begin{align*}
			\mathcal M(L)h_2(x) &= \int_{0}^{\infty} m(t) Le^{-tL}  \Big(\sum_{j}(I-e^{-r_j^{\alpha}L})^mb_j\Big)(x) dt\\
			&\leq\sum_{j} \int_{0}^{\infty}  m(t) Le^{-tL}  \Big((I-e^{-r_j^{\alpha}L})^mb_j\Big)(x) dt.
		\end{align*}
		
		Denote the kernel of  the operator \(\frac{m(t)}{t}tLe^{-tL}\) by \(T_{m(t)}(x,y)\). Then we write
		\begin{align*}
			\frac{m(t)}{t} tLe^{-tL} \Big(\sum_{j} (I-e^{-r_j^{\alpha}L})^mb_j\Big) (x) = \int_{\mathbb R^n} T_{m(t)}(x,y)  \Big(\sum_{j}
			(I-e^{-r_j^{\alpha}L})^mb_j \Big)(y) dy.
		\end{align*}
		The Minkowski's inequality gives
		\begin{align*}
			\|\mathcal{M}(L)h_2 \|_{L^2(\mathbb {R}^{n}\setminus 2Q_{j})}
			&\leq \Big\| \sum_{j} \int_{0}^{\infty}  m(t) Le^{-tL}  \Big((I-e^{-r_j^{\alpha}L})^mb_j\Big) dt\Big\|_{L^2(\mathbb {R}^{n}\setminus 2Q_{j})}\\
			&\leq \sum_{j}  \Big\| \int_{0}^{\infty}  m(t) Le^{-tL}  \Big((I-e^{-r_j^{\alpha}L})^mb_j\Big) dt\Big\|_{L^2(\mathbb {R}^{n}\setminus 2Q_{j})}\\
						&\leq \sum_{j} \int_{0}^{\infty} |m(t)| \Big\| Le^{-tL}  \Big((I-e^{-r_j^{\alpha}L})^mb_j\Big) \Big\|_{L^2(\mathbb {R}^{n}\setminus 2Q_{j})}dt.
		\end{align*}
		By decomposing \(\mathbb {R}^{n}\setminus 2Q_{j}=\cup_{k=2}^{\infty} S_{k}(Q_j)\) we have
				\begin{align*}
			&\qquad \int_{0}^{\infty} |m(t)| \Big\| Le^{-tL}  \Big((I-e^{-r_j^{\alpha}L})^mb_j\Big) \Big\|_{L^2(\mathbb {R}^{n}\setminus 2Q_{j})}dt\\
			&=  \int_{0}^{\infty} |m(t)| \Big[
			\sum_{k=2}^{\infty}
			\int_{S_{k}(Q_j)}
			\Big|
			\int_{[0, r_j^{\alpha}]^m} L^{m+1} e^{-(|\textbf{u}|+t)L} b_{j}(x) d\textbf{u}
			\Big|^{2} dx
			\Big]^{1/2}  dt\\
			&\leq \int_{0}^{\infty} |m(t)| \sum_{k=2}^{\infty} 	\Big\|
			\int_{[0, r_j^{\alpha}]^m} L^{m+1} e^{-(|\textbf{u}|+t)L} b_{j} d\textbf{u}
			\Big\|_{L^{2}(S_{k}(Q_j))}  dt\\
			&\leq \sum_{k=2}^{\infty}  \int_{0}^{\infty} |m(t)| 	\Big\|
			\int_{[0, r_j^{\alpha}]^m} L^{m+1} e^{-(|\textbf{u}|+t)L} b_{j} d\textbf{u}
			\Big\|_{L^{2}(S_{k}(Q_j))}  dt.
		\end{align*}
		Using the Minkowski's inequality again and the boundedness of \(m(t)\), we have
		\begin{equation}\label{eq: mt estimate}
			\begin{aligned}
				&\qquad \sum_{k=2}^{\infty}  \int_{0}^{\infty} |m(t)| 	\Big\|
				\int_{[0, r_j^{\alpha}]^m} L^{m+1} e^{-(|\textbf{u}|+t)L} b_{j} d\textbf{u}
				\Big\|_{L^{2}(S_{k}(Q_j))}  dt\\
				&\leq \sum_{k=2}^{\infty}  \int_{0}^{\infty} |m(t)|
				\int_{[0, r_j^{\alpha}]^m} 	\left\| L^{m+1} e^{-(|\textbf{u}|+t)L} b_{j}\right\|_{L^{2}(S_{k}(Q_j))}   d\textbf{u}
				dt\\
				&\lesssim \sum_{k=2}^{\infty}  \int_{0}^{\infty}
				\int_{[0, r_j^{\alpha}]^m} 	\left\| L^{m+1} e^{-(|\textbf{u}|+t)L} b_{j}\right\|_{L^{2}(S_{k}(Q_j))}   d\textbf{u}
				dt.
				\end{aligned}
		\end{equation}
		By substituting the result \eqref{eq: L^2(S_k(Q_j)) norm} into \eqref{eq: mt estimate} we get
		\begin{align*}
			&\qquad \|\mathcal{M}(L)h_2 \|_{L^2(\mathbb {R}^{n}\setminus 2Q_{j})}\\
			&\lesssim \sum_{k=2}^{\infty}  \int_{0}^{\infty}
			\int_{[0, r_j^{\alpha}]^m} 	\left\| L^{m+1} e^{-(|\textbf{u}|+t)L} b_{j}\right\|_{L^{2}(S_{k}(Q_j))}
			d\textbf{u}
			dt\\
			&\lesssim  |Q_j|^{\frac{1}{2}-\frac{1}{p_0}}  \|b_j\|_{L^{p_{0},1}} \sum_{k=2}^{\infty} 2^{\frac{kn}{2}}  			\int_{0}^{\infty}
			\int_{[0, r_j^{\alpha}]^m}
			(|\textbf{u}|+t)^{-(m+1)}	
			\max\Big\{\Big(\f{r_j}{(|\textbf{u}|+t)^{1/\alpha}}\Big)^{n/2}, \\
			&\qquad \Big(\f{r_j}{(|\textbf{u}|+t)^{1/\alpha}}\Big)^{n}\Big\}
			\Big(1+\f{(|\textbf{u}|+t)^{1/\alpha}}{2^kr_j}\Big)^{n/2} \Big(1+\f{2^kr_j}{(|\textbf{u}|+t)^{1/\alpha}}\Big)^{-n-\epsilon} d\textbf{u}
			dt\\
			&=: |Q_j|^{\frac{1}{2}-\frac{1}{p_0}}  \|b_j\|_{L^{p_{0},1}}  (\text{Term}\mathrm{IV_1}+\text{Term}\mathrm{IV_2}+\text{Term}\mathrm{IV_3}),
		\end{align*}
		where
		\begin{align*}
			\text{Term}\mathrm{IV_1}&:=\sum_{k=2}^{\infty} 2^{\frac{kn}{2}}  \int_{0}^{r_j^{\alpha}}
			\int_{[0, r_j^{\alpha}]^m}
			(|\textbf{u}|+t)^{-(m+1)}
			\max\Big\{\Big(\f{r_j}{(|\textbf{u}|+t)^{1/\alpha}}\Big)^{n/2},\\
			&\qquad 	 \Big(\f{r_j}{(|\textbf{u}|+t)^{1/\alpha}}\Big)^{n}\Big\}
			\Big(1+\f{(|\textbf{u}|+t)^{1/\alpha}}{2^kr_j}\Big)^{n/2} \Big(1+\f{2^kr_j}{(|\textbf{u}|+t)^{1/\alpha}}\Big)^{-n-\epsilon} d\textbf{u}
			dt,
		\end{align*}
		\begin{align*}
			\text{Term}\mathrm{IV_2}&:= \sum_{k=2}^{\infty} 2^{\frac{kn}{2}}  \int_{r_{j}^{\alpha}}^{(2^kr_{j})^{\alpha}}
			\int_{[0, r_j^{\alpha}]^m}
			(|\textbf{u}|+t)^{-(m+1)}
			\max\Big\{\Big(\f{r_j}{(|\textbf{u}|+t)^{1/\alpha}}\Big)^{n/2},\\
			&\qquad 	 \Big(\f{r_j}{(|\textbf{u}|+t)^{1/\alpha}}\Big)^{n}\Big\}
			\Big(1+\f{(|\textbf{u}|+t)^{1/\alpha}}{2^kr_j}\Big)^{n/2}\Big(1+\f{2^kr_j}{(|\textbf{u}|+t)^{1/\alpha}}\Big)^{-n-\epsilon} d\textbf{u}
			dt,
		\end{align*}
		and
		\begin{align*}
			\text{Term}\mathrm{IV_3}&:= \sum_{k=2}^{\infty} 2^{\frac{kn}{2}}  \int_{(2^kr_{j})^{\alpha}}^{\infty}
			\int_{[0, r_j^{\alpha}]^m}
			(|\textbf{u}|+t)^{-(m+1)}
			\max\Big\{\Big(\f{r_j}{(|\textbf{u}|+t)^{1/\alpha}}\Big)^{n/2},\\
			&\qquad 	 \Big(\f{r_j}{(|\textbf{u}|+t)^{1/\alpha}}\Big)^{n}\Big\}
			\Big(1+\f{(|\textbf{u}|+t)^{1/\alpha}}{2^kr_j}\Big)^{n/2} \Big(1+\f{2^kr_j}{(|\textbf{u}|+t)^{1/\alpha}}\Big)^{-n-\epsilon} d\textbf{u}
			dt.
		\end{align*}
		\textbf{Estimate of \(\text{Term}\mathrm{IV_1}\):} Since \(|\textbf{u}|+t\lesssim r_j^{\alpha} \), we have
		\begin{align*}
			\text{Term}\mathrm{IV_1}&=\sum_{k=2}^{\infty} 2^{\frac{kn}{2}}
			r_j^{n} \int_{0}^{r_j^{\alpha}}
			\int_{[0, r_j^{\alpha}]^m}
			(|\textbf{u}|+t)^{-(m+1)-n/\alpha}\Big(1+\f{(|\textbf{u}|+t)^{1/\alpha}}{2^kr_j}\Big)^{n/2}
			\\
			&\qquad\qquad\qquad\qquad\quad \cdot\Big(1+\f{2^kr_j}{(|\textbf{u}|+t)^{1/\alpha}}\Big)^{-n-\epsilon} d\textbf{u}
			dt\\
			&\lesssim \sum_{k=2}^{\infty} 2^{-k(\frac{n}{2}+\epsilon)}
			r_j^{-\epsilon} \int_{0}^{r_j^{\alpha}}
			\int_{[0, r_j^{\alpha}]^m}
			(|\textbf{u}|+t)^{-(m+1)+\epsilon/\alpha}
			d\textbf{u}
			dt.
		\end{align*}
		We write
		\begin{align*}
			\int_{0}^{r_j^{\alpha}} \int_{[0, r_j^{\alpha}]^m}
			(|\textbf{u}|+t)^{-(m+1)+\epsilon/\alpha}
			d\textbf{u}
			dt = \int_{0}^{r_j^{\alpha}} 	\int_{[0, r_j^{\alpha}]^m}
			\Big(
			\frac{1}{(|\textbf{u}| +t)^{1-\frac{\epsilon}{\alpha(m+1)}}}
			\Big)^{m+1}
			d\textbf{u}
			dt,
		\end{align*}
		to get
		\begin{align*}
			\int_{0}^{r_j^{\alpha}} \int_{[0, r_j^{\alpha}]^m}
			(|\textbf{u}|+t)^{-(m+1)+\epsilon/\alpha}
			d\textbf{u}
			dt &\leq \int_{0}^{r_j^{\alpha}}
			t^{-1+\frac{\epsilon}{\alpha(m+1)}}
			\Big(
			\int_{0}^{r_j^{\alpha}}
			u^{-1+\frac{\epsilon}{\alpha(m+1)}} du
			\Big)^{m}
			dt=r_j^{\epsilon}.
		\end{align*}
		Therefore, we obtain that \(\text{Term}\mathrm{IV_1}\lesssim 1\).

		\textbf{Estimate of \(\text{Term}\mathrm{IV_2}\):}
		We use the equivalence \(|\textbf{u}|+t\approx t\), then we have
		\begin{align*}
			\text{Term}\mathrm{IV_2}&\approx \sum_{k=2}^{\infty} 2^{\frac{kn}{2}}  \int_{r_{j}^{\alpha}}^{(2^kr_{j})^{\alpha}}
			\int_{[0, r_j^{\alpha}]^m}
			t^{-(m+1)-n/2\alpha} r_j^{n/2}
			\Big(1+\f{t^{1/\alpha}}{2^kr_j}\Big)^{n/2}\Big(1+\f{2^kr_j}{t^{1/\alpha}}\Big)^{-n-\epsilon} d\textbf{u}
			dt\\
			&\lesssim \sum_{k=2}^{\infty} 2^{-\frac{kn}{2}-\epsilon k}
			r_j^{-n/2-\epsilon+\alpha m}
			\int_{r_{j}^{\alpha}}^{(2^kr_{j})^{\alpha}}
			t^{-(m+1)+n/2\alpha+\epsilon/\alpha}
			dt,
		\end{align*}
		and
		\begin{align*}
			\int_{r_{j}^{\alpha}}^{(2^kr_{j})^{\alpha}}
			t^{-(m+1)+n/2\alpha+\epsilon/\alpha}
			dt = \frac{2^{k(-m\alpha+n/2+\epsilon)}-1}{-m+n/\alpha+\epsilon/\alpha} r_j^{-\alpha m+n/2+\epsilon} .
		\end{align*}
		
		Therefore, we get
		\begin{align*}
			\text{Term}\mathrm{IV_2}\lesssim \sum_{k=2}^{\infty} 2^{-m\alpha k}
			\lesssim 1.
		\end{align*}
		
		\textbf{Estimate of \(\text{Term}\mathrm{IV_3}\):}
		We use the equivalence \(|\textbf{u}|+t\approx t\) again to get
		\begin{align*}
			\text{Term}\mathrm{IV_3} & \approx \sum_{k=2}^{\infty} 2^{\frac{kn}{2}}  \int_{(2^kr_{j})^{\alpha}}^{\infty}
			\int_{[0, r_j^{\alpha}]^m}
			t^{-(m+1)-n/2\alpha} r_j^{n/2}
			\Big(1+\f{t^{1/\alpha}}{2^kr_j}\Big)^{n/2} \Big(1+\f{2^kr_j}{t^{1/\alpha}}\Big)^{-n-\epsilon} d\textbf{u}
			dt\\
			&\lesssim  \sum_{k=2}^{\infty}  \int_{(2^kr_{j})^{\alpha}}^{\infty}
			\int_{[0, r_j^{\alpha}]^m}
			t^{-(m+1)}  \Big(1+\f{2^kr_j}{t^{1/\alpha}}\Big)^{-n-\epsilon} d\textbf{u}
			dt\\
			&\lesssim  \sum_{k=2}^{\infty}
			r_j^{\alpha m}
			\int_{(2^kr_{j})^{\alpha}}^{\infty} 	t^{-(m+1)}
			dt.
		\end{align*}
		Since
		\begin{align*}
			\int_{(2^kr_{j})^{\alpha}}^{\infty} 	t^{-(m+1)}
			dt = \frac{1}{m} 2^{-\alpha m k} r_{j}^{-\alpha m},
		\end{align*}
		we obtain
		\begin{align*}
			\text{Term}\mathrm{IV_3}\lesssim \sum_{k}^{\infty} 2^{-\alpha m k}\lesssim 1.
		\end{align*}
		
		It follows that
		\begin{align*}
			\qquad \|\mathcal{M}(L)h_2 \|_{L^2(\mathbb {R}^{n}\setminus 2Q_{j})}
			&\lesssim  \sum_{j}|Q_j|^{\frac{1}{2}-\frac{1}{p_0}}  \|b_j\|_{L^{p_{0},1}}.
		\end{align*}
		
		Similar to the calculations
		in \eqref{eq: S_L h_2 estimate} and \eqref{eq:term3 L2 estimate2}, we have
		\begin{align*}
			|\{x\in \mathbb R^n:|\mathcal M(L)h_2(x)|>\frac{\lambda}{3}\}| \lesssim \lambda^{-p_0} \|\chi_{E}\|_{L^{p_0}(\mathbb R^n)}^{p_0}.
		\end{align*}
		Therefore, this completes the proof of Theorem \ref{THEMML}.

		%\section*{Statements and Declarations}
		
	%	\noindent\textbf{Acknowledgments}
		\section*{Acknowledgments}
		%This paper is part of Xuejing Huo's PhD thesis. 
		The authors would like to thank The Anh Bui and Ji Li for introducing the topics and for valuable discussions and helpful suggestions. Xueting Han is supported by the Hefei Institute of Technology under grant numbers 2025KY61 and 2025AHGXZK40203. Xuejing Huo is supported by the the International Macquarie University Research Training Program (iMQRTP)  Scholarship.
        %The authors would like to thank Ji Li for his very helpful suggestions.
		
	%	\noindent\textbf{Ethical Approval}
		
	%	The declaration for ethical approval is not applicable.
		
%		\noindent\textbf{Competing interests}
		
	%	The authors declare no competing interest.

	%	\noindent\textbf{Authors'contributions}
		
	%	Xueting Han and Xuejing Huo wrote and reviewed
	%	the manuscript. 
		
	%	\noindent\textbf{Funding}

	%	\noindent\textbf{Availability of data and materials}
		
	%	Data sharing is not applicable to this article as no datasets were generated or analysed during the current study.

	\end{document}